\documentclass[11pt]{article}
 \usepackage{epsfig}
 \usepackage{amssymb,amsmath,amsthm,amscd}
\usepackage{latexsym}
\usepackage{showlabels}
\usepackage{graphicx}
\usepackage{color}
\usepackage{subfigure}
 \theoremstyle{theorem}
 \newtheorem{thm}{Theorem}[section]
 \newtheorem{lemma}[thm]{Lemma}
  
 \newtheorem{definition}[thm]{Definition}
 \newtheorem{prop}[thm]{Proposition}
 \newtheorem{cor}[thm]{Corollary}
  \newtheorem{remark}[thm]{Remark}
 \title{Necessary conditions for existence of tensor invariants for general nonlinear dynamical systems}
\author{
Zitong Zhao\footnote{College of Mathematics, Jilin University, Changchun, Jilin 130012, China ({\tt  zhaozt22@mails.jlu.edu.cn}).},   \;
Shaoyun Shi\footnote{School of Mathematics and Statistics, Changchun University of Science and Technology, Changchun 130022, China ({\tt  shisy@jlu.edu.cn}).},
\;
Wenlei Li\footnote{College of Mathematics, Jilin University,
Changchun, Jilin 130012,  China ({\tt  lwlei@jlu.edu.cn}).},
\;
Zhiguo Xu\footnote{College of Mathematics, Jilin University,
Changchun, Jilin 130012, China ({\tt xuzg2014@jlu.edu.cn}).}
\;
and Kaiyin Huang\footnote{School of Mathematics, Sichuan University, Chengdu 630065,  China ({\tt huangky@scu.edu.cn}).}
}
\date{\empty}

\begin{document}
 \maketitle

 \begin{abstract}
The integrability has been playing an essential role in the field of differential equations. This property may better help us obtain the topological structure and even the global dynamics for the considered system. A system is called integrable if it has a number of tensor invariants, which can comprehensively define the integrability problem. In this paper, we give necessary conditions for existence of tensor invariants for general nonlinear systems, especially semi-quasihomogeneous systems. Our results may be viewed as a generalization of Poincar\'e and Kozlov's work.

 \end{abstract}
\noindent
{\bf Keywords:} Integrability; Tensor invariant; Quasi-homogeneous system; Kovalevskaya exponents.
 \newpage

\section{Introduction}\label{sec1}

One fundamental problem in the field of differential equations is to detect whether a given system is integrable or not.
A system is called integrable if it has a number of invariants such that it can be solved by quadrature or in a closed form. The integrability property of a system may help us obtain the topological structure and even the global dynamics \cite{ref13,xx24}. On the contrary, non-integrability of a system may always push us to expect that the system admits complex dynamical behaviors or chaotic phenomena \cite{xx22,xx23}.

As far as we know, the study of integrability may be traced back originally to Poincar\'e at the end of the 19th century. He gave a criterion on first integrals by using eigenvalues of the Jacobian matrix of the vector field at the fixed point, i.e., if the eigenvalues $\lambda_1,\cdots,\lambda_n$ do not satisfy any resonant equality of the following type
$$
\sum_{j=1}^n k_j\lambda_j=0, ~~k_j\in\mathbb{N}, ~~\sum_{j=1}^n k_j\geq 1,
$$
then the system does not have any nontrivial integral analytic in a neighborhood of the origin \cite{xx25}. Following this idea, Furta \cite{ref2}, Shi et al. \cite{ref23,xx35}, Zhang et al. \cite{xx18,xx3} and Li et al. \cite{xx27} further presented comprehensive results from different perspectives.

Besides above results around the fixed point, which can be regarded as local integrability, there are also a significant number of lecturers investigating issues of integrability in the neighborhood of non-trivial solutions. Just like the case of the fixed point, this perspective may also be traced back to Poincar\'e. He proposed some necessary conditions for the integrability of Hamiltonian systems along the periodic orbits by using Floquet's theory. In 1982, Ziglin extended it to complex analytic Hamiltonian systems. He presented a necessary condition for complete integrability around the Riemann solutions by using single-valued group theory \cite{xx33}. In 1990s, based on Poincar\'e and Ziglin's theory, Morales-Ruiz and Ramis gave the results to complex analytic Hamiltonian systems by using differential Galois theory \cite{xx36}. For more relevant results, see in Yoshida \cite{xx34}, Simon \cite{xx29}, Maciejewski, Przybylska\cite{xx39,xx40}, and Li \cite{xx32,xx26} et al. Meanwhile, there are some other methods around the non-trivial solutions, such as Lilbre, Zhang et al. by using invariant algebraic curves and integrating factors \cite{xx31,xx37,xx38} etc.

Based on the development of complex analysis, people have also always discussed the integrability property of differential equations. For the nonlinear systems, the relevant ideas may be traced back to Kovalevslaya considering all integrable cases of the gyroscopic problem by analyzing the single-valued property of solutions \cite{xx28}. Inspired by the above, people developed a set of singularity analysis methods for the integrability of differential equations, such as Painlev\'e \cite{xx43}. In 1983, Yoshida gave a necessary condition on weighted homogeneous first integrals $\Phi(x^1,\cdots,x^n)$ of weighted degree $M$ for the similarity invariant system. Suppose, the elements of the vector
$$
{\rm grad~}\Phi(c)=[\frac{\partial\Phi}{\partial x^1}(c_1,\cdots,c_n),\cdots, \frac{\partial\Phi}{\partial x^n}(c_1,\cdots,c_n)]
$$
are finite and not identically zero for a fixed choice, then $\rho=M$ becomes a Kovalevskaya exponent. In 1996, Goriely extended Yoshida's method to study partial integrability as follows \cite{ref13}.
Later Furta and Shi et al. also presented some relevant results \cite{ref23}.

With above abundant ideas, people have presented many important results for different types of integrability problems. Consider an analytic system of differential equations
\begin{equation}\label{int}
\dot{x}=F(x),\  x=(x^{1},\cdots,x^{n})\in\mathbb{C}^{n}.
\end{equation}
Several interesting kinds of definitions of integrability can be concluded as follows.

$\bullet$ Completely integrability \cite{xx2,xx3,ref13}: $n-1$ functionally independent first integrals.

$\bullet$ Lie integrability \cite{xx17,xx16}: $n$ linearly independent and commuting symmetries (including vector field $F$ itself).

$\bullet$ Bogoyavlenskij integrability \cite{m8,ref1,xx5}: $k$ functionally independent first integrals $\Phi_{1},\cdots, \Phi_{k}$ and $n-k$ linearly independent vector fields $v_{1}=F(x),\cdots,v_{n-k}$ such that
$$
[v_{i},v_{j}]=0,\ v_{i}(\Phi_{l})=0,\ 1\leq i,j\leq n-k, \ 1\leq l\leq k.
$$

$\bullet$ Jacobi integrability \cite{xx19}: $n-2$ functionally independent first integrals $\Phi_{1},\cdots, \Phi_{k}$ and a Jacobian multiplier.

$\bullet$ Euler-Jacobi-Lie integrability \cite{xx19}: $k$ functionally independent first integrals, $n-k-1$ independent symmetry fields $v_{1}= F,\cdots,v_{n-k}$ generating a nilpotent Lie algebra of the vector fields, and an invariant volume n-form $\Omega$ such that $L_{v_{i}}\Omega=0,\ L_{v_{i}}\Phi_{j}=0,1\leq i\leq n-k-1,\ 1\leq j\leq k$.

It is worth noting that these notions can be uniformly described as tensor invariants. In fact, recall a smooth tensor field $T$ of type $(p,q)$
\begin{equation}\label{TF}
T=T_{j_{1}\cdots j_{q}}^{i_{1}\cdots i_{p}}(x)\frac{\partial}{\partial x^{i_{1}}}\otimes\cdots\otimes\frac{\partial}{\partial x^{i_{p}}}\otimes dx^{j_{1}}\otimes\cdots\otimes dx^{j_{q}},~~i_{r}, j_{s}\in\{1,2,\cdots,n\},
\end{equation}
where components $T^{i_{1}\cdots i_{p}}_{j_{1}\cdots j_{q}}(x)$ are smooth functions, and let $L_{F}$ be the Lie derivative along the vector field $F=(F^1,\cdots,F^n)$, i.e.,
\begin{align}
 (L_{F}T)^{i_{1}\cdots i_{p}}_{j_{1}\cdots j_{q}} = & F^{s}\frac{\partial}{\partial x^{s}}T^{i_{1}\cdots i_{p}}_{j_{1}\cdots j_{q}}+T^{i_{1}\cdots i_{p}}_{kj_{2}\cdots j_{q}}\frac{\partial F^{k}}{\partial x^{j_{1}}}+\cdots+T^{i_{1}\cdots i_{p}}_{j_{1}\cdots j_{q-1}k}\frac{\partial F^{k}}{\partial x^{j_{q}}}\notag
 \\& -T^{li_{2}\cdots i_{p}}_{j_{1}\cdots j_{q}}\frac{\partial F^{i_{1}}}{\partial x^{l}}-\cdots-T^{i_{1}\cdots i_{p-1}l}_{j_{1}\cdots j_{q}}\frac{\partial F^{i_{p}}}{\partial x^{l}}.\label{eq2}
 \end{align}
Here and throughout we use the Einstein summation convention. Specially, $T$ is called an analytic tensor field if all the components of the tensor field are analytic.
\begin{definition}
A tensor field $T$ is called a tensor invariant of system (\ref{int}), if $L_{F}T=0$, or equivalently
$(L_{F}T)^{i_{1}\cdots i_{p}}_{j_{1}\cdots j_{q}}=0$
for any $i_{r}, j_{s}\in\{1,2,\cdots,n\}$. \end{definition}

Following the notions above, one can easily find first integrals are tensor invariants of type $(0,0)$, Lie symmetries are tensor invariants of type $(1,0)$ and invariant $n$-form is also a tensor invariant of type $(0,n)$.

In 1992, Kozlov extended Yoshida's method to a necessary condition for existence of  general  tensor invariants which are non-zero at the balance of the quasi-homogeneous system \cite{xx1}. Let system has a quasi-homogeneous tensor invariant $T$ of degree $m$ and let $T(c)\neq 0$. Then there can be foung integers $1\leq i_1,\cdots,i_p, j_1,\cdots, j_q \leq n$ such that
$$
\rho_{i_1}+\cdots+\rho_{i_p}-\rho_{j_1}-\rho_{j_q}+m=0.
$$
For tensor invariants of type $(1,0)$, Furta's method was extended to Lie symmetries \cite{xx16}. If the quasi-homogeneous system possesses an analytic Lie symmetry of degree $M\geq 0$, then for some $i\in\{0,\cdots,n\}$, a resonant relation holds as follows
$$
\sum_{j=1}^n k_j\lambda_j=l\lambda_i, ~~k_j\in\mathbb{N}, ~~\sum_{j=1}^n k_1\geq M.
$$

In this paper, we try to present a general and completed investigation on the existence of tensor invariants in the general nonlinear differential equations. First of all, we generalize Poincar\'e's method to tensor invariants in a neighborhood of the fixed point. Based on this result we consider the problem in the semi-quasihomogeneous systems by making the variational equation in a neighborhood of the scale-invariant solution and present a necessary condition on the existence of the tensor invariants in the semi-quasihomogeneous systems. It may be noted that as mentioned above, Kozlov ever gave the similar result under some restrictions. However our result may be viewed as a generalization without his restrictions.

This paper is organized as follows. In Section 2, we present a necessary condition for the existence of tensor invariants in the general differential equations. In Section 3, we present a necessary condition for the existence of tensor invariants in the semi-quasihomogeneous systems. In Section 4, we give two examples to illustrate our theorems.

\section{Tensor invariants for general nonlinear systems}
\subsection{Trivial tensor invariant}

In this subsection, we introduce the concept of trivial tensor invariant and present some characterization.
\begin{definition}
If $L_{F}T=0$ holds for any vector field $F$, then $T$ is called a trivial tensor invariant.
\end{definition}

Obviously, $T=c$ is the trivial tensor invariant of type $(0,0)$ (trivial first integral), in what follows we try to provide a complete characterization of the trivial tensor invariants. Let $S_k(i_1,\cdots,i_k)$ denote the set of all permutations of the indices $(i_1,\cdots,i_k)$, and for simplicity, denote $S_k(1,\cdots,k)$ as $S_k$.
\begin{prop}\label{trivial}
If the tensor field $T$ given by \eqref{TF} is trivial, then all of components $T_{j_{1}\cdots j_{q}}^{i_{1}\cdots i_{p}}(x)$ are constant and $p=q$.
Moreover, $T_{j_{1}\cdots j_{p}}^{i_{1}\cdots i_{p}}=0$ for any $(j_1,\cdots,j_p)\notin \Theta_p$, where $\Theta_p= \{(j_1,\cdots,j_p)|(j_1,\cdots,j_p)=\sigma(i_1,\cdots,i_p), \sigma\in S_p(i_1,\cdots,i_p)\}$.
\end{prop}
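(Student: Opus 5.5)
The plan is to test the identity $L_FT=0$ against well-chosen simple vector fields $F$, using formula \eqref{eq2}, which is linear in $F$. Since $T$ is trivial, we may pick $F$ freely.

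\emph{Step 1 (constancy).} Take $F=\partial/\partial x^{s}$, the constant field $F^k=\delta^k_s$. All derivatives $\partial F^k/\partial x^j$ vanish, so \eqref{eq2} reduces to $\partial T^{i_1\cdots i_p}_{j_1\cdots j_q}/\partial x^{s}=0$. This holds for every $s$, so every component is constant. (We work on a connected domain, as is implicit.)

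\emph{Step 2 (linear fields).} With the components now constant, take the linear field $F^k=\delta^k_a x^b$ for fixed $a,b\in\{1,\dots,n\}$, so that $\partial F^k/\partial x^l=\delta^k_a\delta^b_l$. The transport term vanishes by Step 1, and \eqref{eq2} becomes
\begin{equation*}
\sum_{r=1}^{q}\delta^{b}_{j_r}\,T^{i_1\cdots i_p}_{j_1\cdots a\cdots j_q}-\sum_{r=1}^{p}\delta^{i_r}_{a}\,T^{i_1\cdots b\cdots i_p}_{j_1\cdots j_q}=0 ,
\end{equation*}
where in the first sum $a$ replaces $j_r$, and in the second sum $b$ replaces $i_r$. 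This holds for all $a,b$ and all index tuples.

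\emph{Step 3 (diagonal case, $a=b$).} Setting $a=b$ and summing over $a$ gives $(q-p)\,T^{i_1\cdots i_p}_{j_1\cdots j_q}=0$. So either $T\equiv0$ (which trivially satisfies the statement, and can be excluded as degenerate) or $p=q$.

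Alternatively, for fixed $a$ the diagonal identity reads
\[
\bigl(\#\{r: j_r=a\}-\#\{r: i_r=a\}\bigr)\,T^{i_1\cdots i_p}_{j_1\cdots j_q}=0 .
\]
Hence a nonzero component must have, for every $a$, the same multiplicity of $a$ among the upper indices as among the lower ones. That is precisely $(j_1,\dots,j_p)\in\Theta_p$, i.e.\ the lower tuple is a permutation of the upper tuple. This already yields both conclusions of the proposition and makes the summed identity above redundant. Note that $a$ has to be fixed, not summed, to get the multiset condition.

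\emph{Main obstacle.} The argument is short, so the main difficulty is bookkeeping. The index substitutions in \eqref{eq2} must be tracked correctly: the $\partial F^k/\partial x^{j_r}$ terms replace a lower index by the summed $k$, and the $\partial F^{i_r}/\partial x^l$ terms replace an upper index by $l$. Getting these signs and substitutions right is what makes the multiplicity count come out cleanly. One should also note that the off-diagonal equations ($a\neq b$) are not needed for the stated necessary conditions. They would matter only for the converse, where they force $T$ to be a combination of permutation tensors $\delta^{i_1}_{j_{\sigma(1)}}\cdots\delta^{i_p}_{j_{\sigma(p)}}$, but this is not required here.
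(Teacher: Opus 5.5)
Your proposal is correct and follows essentially the same route as the paper. Constant fields $\partial/\partial x^{h}$ force the components to be constant, and the diagonal linear fields $F^{h}=x^{h}$ (your case $a=b$) give the multiplicity identity $\bigl(\sum_{k}\delta^{j_k}_{h}-\sum_{l}\delta^{i_l}_{h}\bigr)T^{i_1\cdots i_p}_{j_1\cdots j_q}=0$, which yields both $p=q$ (for $T\not\equiv 0$, a caveat you state and the paper leaves implicit) and the vanishing of components off $\Theta_p$.
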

\begin{proof}
According to definition of trivial tensor invariant,
\begin{equation*}
 (L_{F}T)^{i_{1}\cdots i_{p}}_{j_{1}\cdots j_{q}}=0, ~~i_{r}, j_{s}\in\{1,2,\cdots,n\}
\end{equation*}
hold for every analytic vector filed $F$.

First of all, by taking $F$ to be $(0,\cdots,1,\cdots, 0)$(with $1$ in the $h-$th position), one can conclude that for any components $T_{j_{1}\cdots j_{q}}^{i_{1}\cdots i_{p}}(x)$,
$$
\frac{\partial}{\partial x^{h}}T^{i_{1}\cdots i_{p}}_{j_{1}\cdots j_{q}}=0,~~h=1,2,\cdots,n,
$$
which means $T_{j_{1}\cdots j_{q}}^{i_{1}\cdots i_{p}}(x)$ are constant.

Next, by choosing $F=(0,\cdots,x_h,\cdots, 0)$, we obtain
\begin{align}\label{pq}
T_{j_{1}\cdots j_{q}}^{i_{1}\cdots i_{p}}(\sum_{k=1}^q \delta_{h}^{j_k}-\sum_{l=1}^p \delta_{h}^{i_l})=0, ~~h=1,2,\cdots,n,
\end{align}
where $\delta^{i}_{j}$ is the Kronecker symbol. Thus
$$
\sum\limits_{k=1}^q \delta_{h}^{j_k}-\sum\limits_{l=1}^p \delta_{h}^{i_l}=0, ~~h=1,2,\cdots,n
$$
hold for any $T_{j_{1}\cdots j_{q}}^{i_{1}\cdots i_{p}}\neq 0$, which implies that $p=q$.


Finally, if $(j_1,\cdots,j_p)\notin \Theta_p$, then $\sum\limits_{k=1}^q \delta_{h}^{j_k}-\sum\limits_{l=1}^p \delta_{h}^{i_l}\ne 0$, consequently $T_{j_{1}\cdots j_{q}}^{i_{1}\cdots i_{p}}=0$ according to \eqref{pq}.
\end{proof}

According to Proposition \ref{trivial}, the trivial tensor invariant has the form
\begin{align}\label{TTI}
T=\sum_{\sigma\in S_p(i_{1},\cdots, i_{p})}\sum_{i_1,\cdots,i_p=1}^nT_{\sigma(i_{1}\cdots i_{p})}^{i_{1}\cdots i_{p}}\frac{\partial}{\partial x^{i_{1}}}\otimes\cdots\otimes\frac{\partial}{\partial x^{i_{p}}}\otimes dx^{i_{\sigma(1)}}\otimes\cdots\otimes dx^{i_{\sigma(p)}}.
\end{align}
where $(x_{i_{\sigma_{1}}}, \cdots, x_{i_{\sigma_{p}}})=\sigma(i_1,\cdots,i_p)$.

We now proceed to investigate the relationships among $T_{\sigma(i_{1}\cdots i_{p})}^{i_{1}\cdots i_{p}}$.

\begin{prop}
 The trivial tensor invariant of type $(1,1)$ has the form
\begin{align}\label{t1-1}
T(x)=c\sum_{i=1}^n\frac{\partial}{\partial x^i}\otimes dx^i,
\end{align}
where $c$ is a constant.
\end{prop}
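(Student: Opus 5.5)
By Proposition \ref{trivial}, a trivial tensor invariant of type $(1,1)$ has constant components, and a component $T^{i}_{j}$ vanishes unless $j$ is a permutation of $i$, i.e.\ unless $j=i$. So
$$
T=\sum_{i=1}^n c_i\frac{\partial}{\partial x^i}\otimes dx^i,
$$
with constants $c_i$. It remains to show that all the $c_i$ coincide. I will do this by feeding the invariance condition a suitable linear vector field that mixes two coordinates.

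For constant components, formula \eqref{eq2} reduces to
$$
(L_FT)^i_j=T^i_k\frac{\partial F^k}{\partial x^j}-T^l_j\frac{\partial F^i}{\partial x^l}.
$$
With $T^i_k=c_i\delta^i_k$ this becomes $(c_i-c_j)\,\partial F^i/\partial x^j$. Fix $h\neq m$ and take $F$ with the single nonzero component $F^h=x_m$. Then the $(i,j)=(h,m)$ component of $L_FT$ equals $c_h-c_m$. Triviality forces this to vanish, so $c_h=c_m$. Since $h,m$ are arbitrary, all $c_i$ equal a common constant $c$, which gives \eqref{t1-1}.

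For completeness I will also verify the converse: for $T=c\,\delta^i_j$ the expression $(c-c)\,\partial F^i/\partial x^j$ vanishes for every $F$, so $T$ is indeed trivial.

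The argument has no real obstacle. The only point needing care is the index bookkeeping in the reduced Lie derivative formula, in particular the signs of the two terms and the fact that the derivative term drops out because the components are constant.
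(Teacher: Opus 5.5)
Your proof is correct and follows the paper's argument: you use Proposition~\ref{trivial} to reduce to constant diagonal components, then test against a linear field with a single off-diagonal derivative to get $c_h-c_m=0$. The paper does the same with $F=(x^j,0,\dots,0)$, which is your choice with $h=1$; your added converse check is not in the paper but is harmless.
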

\begin{proof}
By \eqref{TTI}, the trivial tensor invariant of type $(1,1)$ has the form
$$
T(x)=\sum_{i=1}^n \beta_i \frac{\partial}{\partial x^i}\otimes dx^i.
$$
Let $F=(x^j, 0, \cdots, 0)$, then
$$
(L_FT)_j^1=\beta_1\frac{\partial F^1}{\partial x^j}-\beta_j\frac{\partial F^1}{\partial x^j}=\beta_1-\beta_j=0, ~~j\in \{1,\cdots,n\},
$$
so $T(x)$ takes the form \eqref{t1-1}.
\end{proof}
\begin{prop}
The trivial tensor invariant of type $(2,2)$ has the form
\begin{align}\label{t2-2}
T(x)=\sum_{\sigma\in S_2}c_{\sigma}\sum_{i_1,i_2=1}^n\frac{\partial}{\partial x^{i_1}}\otimes\frac{\partial}{\partial x^{i_2}}\otimes dx^{i_{\sigma(1)}}\otimes dx^{i_{\sigma(2)}},
\end{align}
where $c_{\sigma}$ is a constant.
\end{prop}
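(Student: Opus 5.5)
By Proposition \ref{trivial}, a trivial tensor invariant of type $(2,2)$ has constant components. Moreover, $T^{i_1i_2}_{j_1j_2}=0$ unless $(j_1,j_2)$ is a permutation of $(i_1,i_2)$. So I would write
$$
T=\sum_{i_1,i_2=1}^n\Big(\alpha_{i_1i_2}\frac{\partial}{\partial x^{i_1}}\otimes\frac{\partial}{\partial x^{i_2}}\otimes dx^{i_1}\otimes dx^{i_2}+\beta_{i_1i_2}\frac{\partial}{\partial x^{i_1}}\otimes\frac{\partial}{\partial x^{i_2}}\otimes dx^{i_2}\otimes dx^{i_1}\Big),
$$
where $\alpha_{i_1i_2}=T^{i_1i_2}_{i_1i_2}$ and $\beta_{i_1i_2}=T^{i_1i_2}_{i_2i_1}$ for $i_1\neq i_2$. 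On the diagonal $i_1=i_2=i$ the two terms coincide, so only the sum $\alpha_{ii}+\beta_{ii}=T^{ii}_{ii}$ is meaningful; I will set $\gamma_i=T^{ii}_{ii}$. The claim is that $\alpha_{i_1i_2}\equiv c_{\mathrm{id}}$ and $\beta_{i_1i_2}\equiv c_{(12)}$ are constant off the diagonal, and that $\gamma_i=c_{\mathrm{id}}+c_{(12)}$ for every $i$. Since the components are constant, only the terms of \eqref{eq2} involving $\partial F$ survive. As in the $(1,1)$ case, I would test with linear fields $F$ having a single nonzero entry $F^a=x^b$, so that $\partial F^k/\partial x^l=\delta^k_a\delta^b_l$. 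Then
$$
(L_FT)^{i_1i_2}_{j_1j_2}=\delta^a_{j_1}\,\delta...
$$
More precisely, it equals $\delta^b_{j_1}T^{i_1i_2}_{a j_2}+\delta^b_{j_2}T^{i_1i_2}_{j_1a}-\delta^{i_1}_aT^{b i_2}_{j_1j_2}-\delta^{i_2}_aT^{i_1b}_{j_1j_2}$.

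For the off-diagonal identification, I would fix distinct indices and choose $a\neq b$ together with suitable $(i_1,i_2,j_1,j_2)$, so that exactly two terms survive and equate two coefficients. For example, take $a=1$ and $b=j$ with $j\neq 1$, and evaluate the component $(i_1,i_2;j_1,j_2)=(1,k;j,k)$ with $k\notin\{1,j\}$ (this needs $n\ge 3$). The surviving terms give $T^{1k}_{1k}-T^{jk}_{jk}=0$, i.e.\ $\alpha_{1k}=\alpha_{jk}$. By symmetry in the two slots, and with $\beta$ handled by the component $(1,k;k,j)$, one gets that $\alpha_{i_1i_2}$ and $\beta_{i_1i_2}$ do not depend on either index as long as the indices are distinct. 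This gives the constants $c_{\mathrm{id}}$ and $c_{(12)}$.

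For the diagonal coefficients, take $a=1$, $b=j\neq1$ and the component $(1,1;j,1)$. The surviving terms are $T^{11}_{11}$ from the first $\delta^b_{j_1}$ term, then $-T^{j1}_{j1}$ and $-T^{1j}_{j1}$ from the two subtracted terms. This yields $\gamma_1=\alpha_{j1}+\beta_{1j}=c_{\mathrm{id}}+c_{(12)}$, and similarly for every $\gamma_i$. Substituting these values back reproduces exactly \eqref{t2-2}, because the diagonal terms of the two sums in \eqref{t2-2} add up to $(c_{\mathrm{id}}+c_{(12)})\,\partial_i\otimes\partial_i\otimes dx^i\otimes dx^i$.

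The main obstacle is the bookkeeping: choosing, for each needed identity, an index pattern in which exactly the desired terms survive, while avoiding accidental coincidences of indices. I also have to treat the small-dimension cases. When $n=2$ there is no third index $k$, so the off-diagonal relations must come instead from components such as $(1,2;2,2)$ with $F^1=x^2$, or from $F^a=x^a$. In that case I would check directly that the resulting linear system still forces the stated form. For $n=1$ the statement is trivial.
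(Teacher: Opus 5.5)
Your proposal is correct and follows essentially the paper's route: constant components and permutation support from Proposition~\ref{trivial}, then linear test fields to equate the off-diagonal coefficients (the paper uses the two-entry field $F^{i_1}=F^{i_2}=x^{i_3}$ for $n\ge 3$ where you use single-entry fields), and a field $F^i=x^j$ to get $T^{ii}_{ii}=c_{\mathrm{id}}+c_{(12)}$. The $n=2$ case you defer does close as you expect (the paper uses $F^1=x^2$, $F^2=x^1$), and in fact $F^1=x^2$ alone suffices: its components $(1,1;2,1)$, $(1,1;1,2)$, $(1,2;2,2)$, $(2,1;2,2)$ give $T^{11}_{11}=\alpha_{21}+\beta_{12}=\alpha_{12}+\beta_{21}$ and $T^{22}_{22}=\alpha_{12}+\beta_{12}=\alpha_{21}+\beta_{21}$, which force $\alpha_{12}=\alpha_{21}$, $\beta_{12}=\beta_{21}$ and $T^{11}_{11}=T^{22}_{22}$.
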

\begin{proof}
By \eqref{TTI}, the trivial tensor invariant of type $(2,2)$ has the form
$$
T(x)=\sum_{\sigma\in S_2(i_{1},i_{2})}\sum_{i_1,i_2=1}^n T^{i_1i_2}_{\sigma(i_1i_2)} \frac{\partial}{\partial x^{i_1}}\otimes \frac{\partial}{\partial x^{i_2}}\otimes dx^{i_{\sigma(1)}}\otimes dx^{i_{\sigma(2)}}.
$$

We claim that for any $i_1,i_2,i\in\{1,\cdots,n\}$ with $i_1\neq i_2$,
\begin{align}\label{T12}
T^{i_1i_2}_{\sigma(i_1i_2)}=T^{12}_{\sigma(12)}
\end{align}
and
\begin{align}\label{T11}
T^{ii}_{ii}=\sum\limits_{\sigma\in S_2} T^{12}_{\sigma(12)}.
\end{align}

In fact, if $n=2$, by choosing vector field $F$ with $F^{1}=x^{2}, F^{2}=x^{1}$, we obtain
\begin{align}
& (L_FT)^{11}_{12}=T^{11}_{11}\frac{\partial F^{1}}{\partial x^{2}}-T^{21}_{12}\frac{\partial F^{1}}{\partial x^{2}}-T^{12}_{12}\frac{\partial F^{1}}{\partial x^{2}}=T^{11}_{11}-T^{21}_{12}-T^{12}_{12}=0,\label{01112}\\
& (L_FT)^{21}_{11}=T^{21}_{21}\frac{\partial F^{2}}{\partial x^{1}}+T^{21}_{12}\frac{\partial F^{2}}{\partial x^{1}}-T^{11}_{11}\frac{\partial F^{2}}{\partial x^{1}}=T^{21}_{21}+T^{21}_{12}-T^{11}_{11}=0,\label{02111}\\
& (L_FT)^{12}_{11}=T^{12}_{21}\frac{\partial F^{2}}{\partial x^{1}}+T^{12}_{12}\frac{\partial F^{2}}{\partial x^{1}}-T^{11}_{11}\frac{\partial F^{2}}{\partial x^{1}}=T^{12}_{21}+T^{12}_{12}-T^{11}_{11}=0\label{01211}.
\end{align}
By \eqref{01112}, \eqref{02111} and \eqref{01211}, we get $T^{21}_{\sigma(21)}=T^{12}_{\sigma(12)}$, i.e., \eqref{T12} hold.

If $n\geq 3$, then for any $i_3\in\{1,\cdots,n\}$ with $i_3\ne i_1, i_3\ne i_2$,
we choose vector field $F$ with $F^{i_1}=x^{i_3}, F^{i_2}=x^{i_3}$, then based on the fact that $T_{j_{1} j_{2}}^{i_{1}i_{2}}=0$ when $(j_1,j_2)\notin \Theta_2$,
\begin{align*}
& (L_FT)^{i_{1}i_{2}}_{\sigma(i_{1}i_{3})}=T^{i_{1}i_{2}}_{\sigma(i_{1}i_{2})}\frac{\partial F^{i_2}}{\partial x^{i_3}}-T^{i_{1}i_{3}}_{\sigma(i_{1}i_{3})}\frac{\partial F^{i_2}}{\partial x^{i_3}}=T^{i_{1}i_{2}}_{\sigma(i_{1}i_{2})}-T^{i_{1}i_{3}}_{\sigma(i_{1}i_{3})}=0,\\
& (L_FT)^{i_{1}i_{2}}_{\sigma(i_{3}i_{2})}=T^{i_{1}i_{2}}_{\sigma(i _{1}i_{2})}\frac{\partial F^{i_1}}{\partial x^{i_3}}-T^{i_{3}i_{2}}_{\sigma(i_{3}i_{2})}\frac{\partial F^{i_1}}{\partial x^{i_3}}=T^{i_{1}i_{2}}_{\sigma(i _{1}i_{2})}-T^{i_{3}i_{2}}_{\sigma(i_{3}i_{2})}=0,
\end{align*}
which lead to
\begin{align*}
T^{i_{1}i_{3}}_{\sigma(i_{1}i_{3})}=
T^{i_{1}i_{2}}_{\sigma(i _{1}i_{2})}=T^{i_{3}i_{2}}_{\sigma(i_{3}i_{2})}.
\end{align*}
Owing to the arbitrary choice of $i_1,i_2,i_3$, \eqref{T12} hold.

To prove \eqref{T11}, we choose $j\in\{1,\cdots,n\}$ with $j\neq i$ and $F$ with $F^{i}=x^{j}$, then
\begin{align*}
& (L_FT)^{ii}_{ij}=T^{ii}_{ii}\frac{\partial F^{i}}{\partial x^{j}}-T^{ji}_{ij}\frac{\partial F^{i}}{\partial x^{j}}-T^{ij}_{ij}\frac{\partial F^{i}}{\partial x^{j}}=T^{ii}_{ii}-T^{ji}_{ij}-T^{ij}_{ij}=0,
\end{align*}
which together with \eqref{T12} imply
\begin{align*}
T^{ii}_{ii}=T^{ij}_{ij}+T^{ji}_{ij}=\sum\limits_{\sigma\in S_2}T^{12}_{\sigma(12)}.
\end{align*}

Setting $c_{\sigma}=T^{12}_{\sigma(12)}$, it follows from \eqref{T12} and \eqref{T11} that \eqref{t2-2} holds.
\end{proof}

\begin{prop}
The trivial tensor invariant of type $(3,3)$ has the form
\begin{align}\label{t3-3}
T(x)=\sum_{\sigma\in S_3}c_{\sigma}\sum_{i_1,i_2,i_3=1}^n\frac{\partial}{\partial x^{i_1}}\otimes\frac{\partial}{\partial x^{i_2}}\otimes\frac{\partial}{\partial x^{i_3}}\otimes dx^{i_{\sigma(1)}}\otimes dx^{i_{\sigma(2)}}\otimes dx^{i_{\sigma(3)}},
\end{align}
where $c_{\sigma}$ is a constant.
\end{prop}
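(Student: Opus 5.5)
Following the pattern of the $(1,1)$ and $(2,2)$ cases, we start from \eqref{TTI}: by Proposition \ref{trivial}, $T$ has constant components and $T^{i_1i_2i_3}_{j_1j_2j_3}=0$ unless $(j_1,j_2,j_3)$ is a permutation of $(i_1,i_2,i_3)$. We then show that the surviving components depend only on the permutation pattern. Concretely, for pairwise distinct indices we aim to prove
\begin{align*}
T^{i_1i_2i_3}_{\sigma(i_1i_2i_3)}=T^{123}_{\sigma(123)},\qquad \sigma\in S_3 .
\end{align*}
The components with repeated indices should then be sums of these basic constants over the permutations collapsing to the same index pattern. For instance, we expect
\begin{align*}
T^{iij}_{iij}=c_{\rm id}+c_{(12)},\qquad T^{iii}_{iii}=\sum_{\sigma\in S_3}c_\sigma .
\end{align*}
These are exactly the coefficients produced by expanding \eqref{t3-3} and grouping terms with coinciding indices. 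So the target is to match \eqref{t3-3} componentwise.

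\textbf{Step 1 (distinct indices).} Use the linear fields $F^{a}=x^{b}$ (all other components zero), for which $\partial F^a/\partial x^b=1$ is the only nonzero derivative. For such $F$, formula \eqref{eq2} reads
\begin{align*}
(L_FT)^{I}_{J}=\sum_{k:\,j_k=b}T^{I}_{J[j_k\to a]}-\sum_{l:\,i_l=a}T^{I[i_l\to b]}_{J}.
\end{align*}
Take $n\ge4$, distinct $i_1,i_2,i_3$ and a fourth index $m$. Evaluate with $a=i_1$, $b=m$ on the component $I=(i_1i_2i_3)$, $J=\sigma(m\,i_2i_3)$. The vanishing of non-permutation components leaves exactly
\begin{align*}
T^{i_1i_2i_3}_{\sigma(i_1i_2i_3)}=T^{m i_2i_3}_{\sigma(m i_2i_3)}.
\end{align*}
Hence any one upper index can be replaced by an unused one, with the lower indices transported along the same permutation. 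Iterating this replacement reaches $(1,2,3)$ from any triple of distinct indices. For $n=3$ there is no free index, so we instead use relabeling symmetries. Either fields like $F^{1}=x^{2},F^{2}=x^{1}$, as in the $(2,2)$ proof, or the intermediate equations with repeated indices produced in Step 2 should show invariance under transpositions of labels.

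\textbf{Step 2 (repeated indices).} Apply $F^{i}=x^{j}$ with $j\ne i$ to a component whose upper indices contain $i$ and whose lower indices contain one extra $j$. For example, evaluating on $(L_FT)^{iik}_{\tau(ijk)}$ for distinct $i,j,k$ gives
\begin{align*}
T^{iik}_{\tau(iik)}=T^{jik}_{\tau(ijk)}+T^{ijk}_{\tau(ijk)}.
\end{align*}
The right-hand side is known from Step 1, so $T^{iik}_{\cdot}$ equals the sum of the two $c_\sigma$ that collapse to it. Similarly, $(L_FT)^{iii}_{\tau(iij)}$ expresses $T^{iii}_{iii}$ through the components $T^{iij}$, $T^{iji}$ and $T^{jii}$. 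Summing and using the previous line gives $\sum_\sigma c_\sigma$, which matches the coefficient in \eqref{t3-3}.

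\textbf{Main obstacle.} The hardest part is the bookkeeping: keeping track of which permutation $\sigma$ is carried to which under the index substitutions, so that the partial sums in Step 2 match exactly the coefficients of \eqref{t3-3} after collapsing indices. The low-dimensional cases $n=2$ and $n=3$ are a further difficulty, since Step 1's fourth index is unavailable there. For them we would directly write out the finite linear system from a few linear fields such as $x^{j}\partial_{x^i}$, as the paper did for $n=2$ in the $(2,2)$ case.
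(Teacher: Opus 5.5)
Your Steps 1 and 2 are correct and follow the paper's proof. Both arguments take constancy and permutation support from Proposition \ref{trivial}. Both use a free fourth index for $n\ge 4$: the paper uses $F^{i_1}=F^{i_2}=F^{i_3}=x^{i_4}$ where you use the single field $F^{i_1}=x^m$, and the resulting identity is the same. Both use $F^i=x^j$ for components with a repeated index, and both finish with a summation giving $T^{iii}_{iii}$.

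\textbf{The gap is the case $n=3$.} You leave it at ``should'', and it is exactly where the paper does explicit work. Your second suggestion is the one that closes it. For distinct $i,j,k$ and any $\tau\in S_3$, take your Step 2 identity together with the identity from the reverse field $F^j=x^i$ evaluated on $(L_FT)^{jik}_{\tau(iik)}$:
\begin{align*}
T^{iik}_{\tau(iik)}=T^{jik}_{\tau(ijk)}+T^{ijk}_{\tau(ijk)},\qquad
T^{iik}_{\tau(iik)}=T^{jik}_{\tau(jik)}+T^{jik}_{\tau(ijk)}.
\end{align*}
Subtracting gives $T^{ijk}_{\tau(ijk)}=T^{jik}_{\tau(jik)}$. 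So for a fixed pattern $\tau$, the component is unchanged when the labels in upper positions $1$ and $2$ are exchanged. Putting the repeated label in positions $\{1,3\}$ or $\{2,3\}$ instead gives the other transpositions. Hence $T^{i_1i_2i_3}_{\tau(i_1i_2i_3)}=T^{123}_{\tau(123)}$ for $n=3$, with no free index needed.

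This is what the paper's equations \eqref{112132}--\eqref{121123} do with the swap field $F^{i_1}=x^{i_3}$, $F^{i_3}=x^{i_1}$. On each of those components, only one of its two summands produces nonzero terms. For $n=3$ this reverses your order: the Step 2 relations are used before Step 1 is available, whereas you wrote that their right-hand side is ``known from Step 1''. That is harmless, since they are just linear identities among constants.

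For $n\le 2$, your Step 2 has no third label and $c_\sigma=T^{123}_{\sigma(123)}$ is undefined. The six permutation tensors are also linearly dependent there. So the statement only asserts that some $c_\sigma$ exist, and it needs a separate argument. The paper's proof likewise treats only $n\ge 3$, so this is an omission you share with it, not a divergence from it.
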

\begin{proof}
We claim that for any $i_1,i_2,i_3,i\in\{1,\cdots,n\}$ with $i_1, i_2, i_3$ are pairwise distinct,
\begin{align}\label{T123}
T^{i_1i_2i_3}_{\sigma(i_1i_2i_3)}=T^{123}_{\sigma(123)},
\end{align}
\begin{align}\label{T111}
T^{iii}_{iii}=\sum\limits_{\sigma\in S_3} T^{123}_{\sigma(123)}
\end{align}
and
\begin{align}\label{T112}
& T^{i_1i_1i_2}_{i_1i_1i_2}=c_{\sigma_1}+c_{\sigma_3}, ~~T^{i_1i_1i_2}_{i_1i_2i_1}=c_{\sigma_2}+c_{\sigma_4}, ~~T^{i_1i_1i_2}_{i_2i_1i_1}=c_{\sigma_5}+c_{\sigma_6},\notag\\
& T^{i_1i_2i_1}_{i_1i_2i_1}=c_{\sigma_1}+c_{\sigma_6}, ~~T^{i_1i_2i_1}_{i_1i_1i_2}=c_{\sigma_2}+c_{\sigma_5}, ~~T^{i_1i_2i_1}_{i_2i_1i_1}=c_{\sigma_3}+c_{\sigma_4},\notag\\
& T^{i_2i_1i_1}_{i_2i_1i_1}=c_{\sigma_1}+c_{\sigma_2}, ~~T^{i_2i_1i_1}_{i_1i_2i_1}=c_{\sigma_3}+c_{\sigma_5}, ~~T^{i_2i_1i_1}_{i_1i_1i_2}=c_{\sigma_4}+c_{\sigma_6},
\end{align}
where $c_{\sigma_k}=T^{123}_{\sigma_k(123)}$ which is arranged in the lexicographical order.

In fact, if $n=3$, by choosing vector field $F$ with $F^{i_1}=x^{i_3}, F^{i_3}=x^{i_1}$, we obtain
\begin{align}
& (L_FT)^{i_{1}i_{1}i_{2}}_{i_{1}i_{3}i_{2}}=T^{i_{1}i_{1}i_{2}}_{i_{1}i_{1}i_{2}}\frac{\partial F^{i_1}}{\partial x_{i_3}}-T^{i_{3}i_{1}i_{2}}_{i_{1}i_{3}i_{2}}\frac{\partial F^{i_1}}{\partial x_{i_3}}-T^{i_{1}i_{3}i_{2}}_{i_{1}i_{3}i_{2}}\frac{\partial F^{i_1}}{\partial x_{i_3}}=T^{i_{1}i_{1}i_{2}}_{i_{1}i_{1}i_{2}}-T^{i_{3}i_{1}i_{2}}_{i_{1}i_{3}i_{2}}
-T^{i_{1}i_{3}i_{2}}_{i_{1}i_{3}i_{2}}=0,\label{112132}\\
& (L_FT)^{i_{3}i_{1}i_{2}}_{i_{1}i_{1}i_{2}}=T^{i_{3}i_{1}i_{2}}_{i_{3}i_{1}i_{2}}\frac{\partial F^{i_3}}{\partial x_{i_1}}+T^{i_{3}i_{1}i_{2}}_{i_{1}i_{3}i_{2}}\frac{\partial F^{i_3}}{\partial x_{i_1}}-T^{i_{1}i_{1}i_{2}}_{i_{1}i_{1}i_{2}}\frac{\partial F^{i_3}}{\partial x_{i_1}}=T^{i_{3}i_{1}i_{2}}_{i_{3}i_{1}i_{2}}
+T^{i_{3}i_{1}i_{2}}_{i_{1}i_{3}i_{2}}
-T^{i_{1}i_{1}i_{2}}_{i_{1}i_{1}i_{2}}=0,\label{312112}\\
& (L_FT)^{i_{3}i_{2}i_{1}}_{i_{1}i_{2}i_{1}}=T^{i_{3}i_{2}i_{1}}_{i_{3}i_{2}i_{1}}\frac{\partial F^{i_3}}{\partial x_{i_1}}+T^{i_{3}i_{2}i_{1}}_{i_{1}i_{2}i_{3}}\frac{\partial F^{i_3}}{\partial x_{i_1}}-T^{i_{1}i_{2}i_{1}}_{i_{1}i_{2}i_{1}}\frac{\partial F^{i_3}}{\partial x_{i_1}}=T^{i_{3}i_{2}i_{1}}_{i_{3}i_{2}i_{1}}
+T^{i_{3}i_{2}i_{1}}_{i_{1}i_{2}i_{3}}
-T^{i_{1}i_{2}i_{1}}_{i_{1}i_{2}i_{1}}=0,\label{321121}\\
& (L_FT)^{i_{1}i_{2}i_{1}}_{i_{1}i_{2}i_{3}}=T^{i_{1}i_{2}i_{1}}_{i_{1}i_{2}i_{1}}\frac{\partial F^{i_1}}{\partial x_{i_3}}-T^{i_{3}i_{2}i_{1}}_{i_{1}i_{2}i_{3}}\frac{\partial F^{i_1}}{\partial x_{i_3}}-T^{i_{1}i_{2}i_{3}}_{i_{1}i_{2}i_{3}}\frac{\partial F^{i_1}}{\partial x_{i_3}}=T^{i_{1}i_{2}i_{1}}_{i_{1}i_{2}i_{1}}
-T^{i_{3}i_{2}i_{1}}_{i_{1}i_{2}i_{3}}
-T^{i_{1}i_{2}i_{3}}_{i_{1}i_{2}i_{3}}=0\label{121123}.
\end{align}
By \eqref{112132}, \eqref{312112}, \eqref{321121} and \eqref{121123}, we get
\begin{align*}
T^{i_{1}i_{2}i_{3}}_{i_{1}i_{2}i_{3}}=T^{i_{3}i_{2}i_{1}}_{i_{3}i_{2}i_{1}},
~T^{i_{1}i_{3}i_{2}}_{i_{1}i_{3}i_{2}}=T^{i_{3}i_{1}i_{2}}_{i_{3}i_{1}i_{2}}.
\end{align*}
Owing to the arbitrary choice of $i_1,i_2,i_3$,
$T^{i_{1}i_{2}i_{3}}_{i_{1}i_{2}i_{3}}=T^{123}_{123}$.

By choosing vector field $F$ with $F^{i_1}=x^{i_2}, F^{i_2}=x^{i_1}, F^{i_3}=x^{i_1}$, we obtain
\begin{align}
& (L_FT)^{i_{1}i_{2}i_{3}}_{i_{2}i_{1}i_{1}}=T^{i_{1}i_{2}i_{3}}_{i_{2}i_{3}i_{1}}\frac{\partial F^{i_3}}{\partial x_{i_1}}+T^{i_{1}i_{2}i_{3}}_{i_{2}i_{1}i_{3}}\frac{\partial F^{i_3}}{\partial x_{i_1}}-T^{i_{1}i_{2}i_{1}}_{i_{2}i_{1}i_{1}}\frac{\partial F^{i_3}}{\partial x_{i_1}}=T^{i_{1}i_{2}i_{3}}_{i_{2}i_{3}i_{1}}
+T^{i_{1}i_{2}i_{3}}_{i_{2}i_{1}i_{3}}
-T^{i_{1}i_{2}i_{1}}_{i_{2}i_{1}i_{1}}=0,\label{123211}\\
& (L_FT)^{i_{1}i_{2}i_{1}}_{i_{2}i_{1}i_{3}}=T^{i_{1}i_{2}i_{1}}_{i_{2}i_{1}i_{1}}\frac{\partial F^{i_1}}{\partial x_{i_3}}-T^{i_{3}i_{2}i_{1}}_{i_{2}i_{1}i_{3}}\frac{\partial F^{i_1}}{\partial x_{i_3}}-T^{i_{1}i_{2}i_{3}}_{i_{2}i_{1}i_{3}}\frac{\partial F^{i_1}}{\partial x_{i_3}}=T^{i_{1}i_{2}i_{1}}_{i_{2}i_{1}i_{1}}
-T^{i_{3}i_{2}i_{1}}_{i_{2}i_{1}i_{3}}
-T^{i_{1}i_{2}i_{3}}_{i_{2}i_{1}i_{3}}=0,\label{121213}\\
& (L_FT)^{i_{1}i_{2}i_{3}}_{i_{1}i_{3}i_{1}}=T^{i_{1}i_{2}i_{3}}_{i_{2}i_{3}i_{1}}\frac{\partial F^{i_2}}{\partial x_{i_1}}+T^{i_{1}i_{2}i_{3}}_{i_{1}i_{3}i_{2}}\frac{\partial F^{i_2}}{\partial x_{i_1}}-T^{i_{1}i_{1}i_{3}}_{i_{1}i_{3}i_{1}}\frac{\partial F^{i_2}}{\partial x_{i_1}}=T^{i_{1}i_{2}i_{3}}_{i_{2}i_{3}i_{1}}
+T^{i_{1}i_{2}i_{3}}_{i_{1}i_{3}i_{2}}
-T^{i_{1}i_{1}i_{3}}_{i_{1}i_{3}i_{1}}=0,\label{123131}\\
& (L_FT)^{i_{1}i_{1}i_{3}}_{i_{1}i_{3}i_{2}}=T^{i_{1}i_{1}i_{3}}_{i_{1}i_{3}i_{1}}\frac{\partial F^{i_1}}{\partial x_{i_2}}-T^{i_{2}i_{1}i_{3}}_{i_{1}i_{3}i_{2}}\frac{\partial F^{i_1}}{\partial x_{i_2}}-T^{i_{1}i_{2}i_{3}}_{i_{1}i_{3}i_{2}}\frac{\partial F^{i_1}}{\partial x_{i_2}}=T^{i_{1}i_{1}i_{3}}_{i_{1}i_{3}i_{1}}
-T^{i_{2}i_{1}i_{3}}_{i_{1}i_{3}i_{2}}
-T^{i_{1}i_{2}i_{3}}_{i_{1}i_{3}i_{2}}=0\label{113132},
\end{align}
By \eqref{123211}, \eqref{121213}, \eqref{123131} and \eqref{113132}, we get
\begin{align*}
T^{i_{1}i_{2}i_{3}}_{i_{2}i_{3}i_{1}}=T^{i_{3}i_{2}i_{1}}_{i_{2}i_{1}i_{3}}
=T^{i_{2}i_{1}i_{3}}_{i_{1}i_{3}i_{2}}.
\end{align*}
Owing to the arbitrary choice of $i_1,i_2,i_3$,
$T^{i_{1}i_{2}i_{3}}_{i_{2}i_{3}i_{1}}=T^{123}_{231}$.

Similarly, \eqref{T123} hold.

If $n\geq 4$, then for any $i_4\in\{1,\cdots,n\}$ with $i_4\ne i_1, i_4\ne i_2, i_4\ne i_3$,
we choose vector field $F$ with $F^{i_1}=x^{i_4}, F^{i_2}=x^{i_4}, F^{i_3}=x^{i_4}$, then based on the fact that $T_{j_{1} j_{2}j_{3}}^{i_{1}i_{2}i_{3}}=0$ when $(j_1,j_2,j_3)\notin \Theta_3$,
\begin{align*}
&
(L_FT)^{i_{1}i_{2}i_{3}}_{\sigma(i_{1}i_{2}i_{4})}=T^{i_{1}i_{2}i_{3}}_{\sigma(i_{1}i_{2}i_{3})}\frac{\partial F^{i_3}}{\partial x^{i_4}}-T^{i_{1}i_{2}i_{4}}_{\sigma(i_{1}i_{2}i_{4})}\frac{\partial F^{i_3}}{\partial x^{i_4}}=T^{i_{1}i_{2}i_{3}}_{\sigma(i_{1}i_{2}i_{3})}
-T^{i_{1}i_{2}i_{4}}_{\sigma(i_{1}i_{2}i_{4})}=0,\\
& (L_FT)^{i_{1}i_{2}i_{3}}_{\sigma(i_{1}i_{4}i_{3})}=T^{i_{1}i_{2}i_{3}}_{\sigma(i_{1}i_{2}i_{3})}\frac{\partial F^{i_2}}{\partial x^{i_4}}-T^{i_{1}i_{4}i_{3}}_{\sigma(i_{1}i_{4}i_{3})}\frac{\partial F^{i_2}}{\partial x^{i_4}}=T^{i_{1}i_{2}i_{3}}_{\sigma(i_{1}i_{2}i_{3})}
-T^{i_{1}i_{4}i_{3}}_{\sigma(i_{1}i_{4}i_{3})}=0,\\
& (L_FT)^{i_{1}i_{2}i_{3}}_{\sigma(i_{4}i_{2}i_{3})}=T^{i_{1}i_{2}i_{3}}_{\sigma(i_{1}i_{2}i_{3})}\frac{\partial F^{i_1}}{\partial x^{i_4}}-T^{i_{4}i_{2}i_{3}}_{\sigma(i_{4}i_{2}i_{3})}\frac{\partial F^{i_1}}{\partial x^{i_4}}=T^{i_{1}i_{2}i_{3}}_{\sigma(i_{1}i_{2}i_{3})}
-T^{i_{4}i_{2}i_{3}}_{\sigma(i_{4}i_{2}i_{3})}=0,
\end{align*}
which leads to
\begin{align*}
T^{i_1i_2i_3}_{\sigma(i_{1}i_{2}i_{3})}
=T^{i_{1}i_{2}i_{4}}_{\sigma(i_{1}i_{2}i_{4})}
=T^{i_{1}i_{4}i_{3}}_{\sigma(i_{1}i_{4}i_{3})}
=T^{i_{4}i_{2}i_{3}}_{\sigma(i_{4}i_{2}i_{3})}.
\end{align*}
Owing to the arbitrary choice of $i_1,i_2,i_3,i_4$, \eqref{T123} hold.

To prove \eqref{T112}, we choose vector field $F$ with $F^{i_1}=x^{i_3}$, then
\begin{align*}
(L_FT)^{i_{1}i_{1}i_{2}}_{i_{1}i_{3}i_{2}}=T^{i_{1}i_{1}i_{2}}_{i_{1}i_{1}i_{2}}\frac{\partial F^{i_1}}{\partial x_{i_3}}-T^{i_{3}i_{1}i_{2}}_{i_{1}i_{3}i_{2}}\frac{\partial F^{i_1}}{\partial x_{i_3}}-T^{i_{1}i_{3}i_{2}}_{i_{1}i_{3}i_{2}}\frac{\partial F^{i_1}}{\partial x_{i_3}}=T^{i_{1}i_{1}i_{2}}_{i_{1}i_{1}i_{2}}-T^{i_{3}i_{1}i_{2}}_{i_{1}i_{3}i_{2}}
-T^{i_{1}i_{3}i_{2}}_{i_{1}i_{3}i_{2}}=0,
\end{align*}
Similarly, together with \eqref{T123}, \eqref{T112} hold.

To prove \eqref{T111}, we choose $j\in\{1,\cdots,n\}$ with $j\neq i$ and $F$ with $F^{j}=x^{i}$, then
\begin{align*}
(L_FT)_{iii}^{jii}&=T_{jii}^{jii}\frac{\partial F^{j}}{\partial x_{i}}+T_{iji}^{jii}\frac{\partial F^{j}}{\partial x_{i}}+T_{iij}^{jii}\frac{\partial F^{j}}{\partial x_{i}}-T^{iii}_{iii}\frac{\partial F^{j}}{\partial x_{i}}\\
&=T_{jii}^{jii}
+T_{iji}^{jii}
+T_{iij}^{jii}
-T^{iii}_{iii}=0,
\end{align*}
which together with \eqref{T112} \eqref{T123} imply
\begin{align*}
T^{iii}_{iii}=\sum\limits_{\sigma\in S_3}T^{123}_{\sigma(123)}.
\end{align*}

Setting $c_{\sigma}=T^{123}_{\sigma(123)}$, it follows from \eqref{T123}, \eqref{T111} and \eqref{T112} that \eqref{t3-3} holds.
\end{proof}

Consequently, we can give a conjecture that the trivial tensor invariant of type $(p,p)$ has the form
\begin{align*}
T(x)=\sum_{\sigma\in S_p} c_{\sigma}\sum_{i_1,\cdots,i_p=1}^n\frac{\partial}{\partial x^{i_1}}\otimes\cdots\otimes\frac{\partial}{\partial x^{i_p}}\otimes dx^{i_{\sigma(1)}}\otimes\cdots\otimes dx^{i_{\sigma(p)}},
\end{align*}
where $c_{\sigma}$ is a constant.

\subsection{Necessary condition for the existence of analytic tensor invariants}
Consider an analytic system of differential equations
\begin{equation}\label{eq01}
\dot{x}=F(x),\  x=(x^{1},\cdots,x^{n})\in\mathbb{C}^{n},
\end{equation}
where $F(x)=(F^{1}(x),\cdots,F^{n}(x))$ is an analytic vector-valued function and $F(0)=0$. System \eqref{eq01} can be rewritten as
\begin{equation}\label{eq1}
\dot{x}=Ax+\tilde{f}(x)
\end{equation}
in a neighbourhood of $x=0$, where $A$ is the Jacobi matrix of the vector field $F(x)$ at $x=0$, and $\tilde{f}(x)=O(x^2)$.

To prove the main theorem of this section, we need the following lemma.

\begin{lemma}\label{lemma1}
If system (\ref{eq1}) has an analytic tensor invariant in a neighbourhood of $x=0$ which can be written as
\begin{equation}\label{exp2}
T=T^{(k)}+T^{(k+1)}+\cdots,
\end{equation}
where $T^{(r)}(r=k,k+1,\cdots,)$ is an analytic tensor field of type $(p,q)$ whose components are homogeneous polynomials of degree $r$, and $T^{(k)}\not\equiv 0$,
then $T^{(k)}$ is an analytic tensor invariant of linear system
\begin{equation}\label{eq00}
\dot{x}=Ax.
\end{equation}
\end{lemma}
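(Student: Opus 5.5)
We plan to substitute the expansion \eqref{exp2} into the invariance condition $L_F T=0$, written componentwise via \eqref{eq2}, and then collect terms by homogeneous degree. Write $F(x)=Ax+\tilde f(x)$ and expand $\tilde f=f^{(2)}+f^{(3)}+\cdots$, where $f^{(j)}$ has homogeneous polynomial components of degree $j$. This expansion converges in a neighbourhood of $x=0$ because $F$ is analytic there. The key observation is that the Lie derivative \eqref{eq2} is bilinear in the pair $(F,T)$. Hence
$$L_FT=L_{Ax}T+L_{\tilde f}T=\sum_{r\ge k}L_{Ax}T^{(r)}+\sum_{r\ge k}\sum_{j\ge 2}L_{f^{(j)}}T^{(r)}.$$

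Next we track degrees term by term in \eqref{eq2}. Take a vector field $G$ with components homogeneous of degree $j$ and a tensor field $S$ with components homogeneous of degree $r$.
\begin{itemize}
\item The transport term $G^s\partial_{x^s}S$ is homogeneous of degree $j+(r-1)=r+j-1$.
\item Each algebraic term $S\,\partial G^k/\partial x^{j_l}$ or $S\,\partial G^{i_l}/\partial x^l$ has degree $r+(j-1)$.
\end{itemize}
So every component of $L_GS$ is a homogeneous polynomial of degree $r+j-1$. For the linear part, $j=1$, this shows $L_{Ax}T^{(r)}$ is homogeneous of degree $r$. For the nonlinear terms, $j\ge2$, the degree of $L_{f^{(j)}}T^{(r)}$ is $r+j-1\ge r+1\ge k+1$.

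The lowest possible degree in the expansion of $(L_FT)^{i_1\cdots i_p}_{j_1\cdots j_q}$ is therefore $k$. Its only contribution is $(L_{Ax}T^{(k)})^{i_1\cdots i_p}_{j_1\cdots j_q}$. A convergent power series vanishing identically on a neighbourhood of $0$ has all its homogeneous components zero. Applying this to each component gives $L_{Ax}T^{(k)}=0$, which means $T^{(k)}$ is a tensor invariant of the linear system \eqref{eq00}. Its components are polynomials, so it is analytic, and it is nonzero by hypothesis.

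No step here is a real obstacle. The only point needing care is justifying the regrouping of the double series by total degree. We would handle it by noting that each component of $L_FT$ is an analytic function near $0$. Its Taylor expansion can be obtained by multiplying and differentiating the absolutely convergent series of $F$ and $T$, which allows rearrangement and identification of its homogeneous parts.
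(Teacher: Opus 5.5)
Your proposal is correct and follows the paper's proof. Both substitute the homogeneous expansion into the componentwise formula for $L_FT$ and read off the lowest-degree part, which is exactly $L_{Ax}T^{(k)}$; your degree count ($r+j-1\ge k+1$ for every term $L_{f^{(j)}}T^{(r)}$) spells out what the paper summarizes as ``comparing the coefficients of the lowest powers of $x$.''
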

\begin{proof}
Since $T$ is a tensor invariant of system (\ref{eq1}),
\begin{equation}\label{eq02}
\begin{aligned}
 (L_{F}T)^{i_{1}\cdots i_{p}}_{j_{1}\cdots j_{q}}=0
 \end{aligned}
\end{equation}
holds for any $i_{r}, j_{s}\in\{1,2,\cdots,n\}$. Substituting (\ref{exp2}) into (\ref{eq02}) gives
\begin{align}\label{com1}
0=&[(Ax)^{s}+\tilde{f}^{s}(x)]\frac{\partial}{\partial x^{s}}(T^{(k)i_{1}\cdots i_{p}}_{j_{1}\cdots j_{q}}+T^{(k+1)i_{1}\cdots i_{p}}_{j_{1}\cdots j_{q}}+\cdots)\notag\\
&
+(T^{(k)i_{1}\cdots i_{p}}_{sj_{2}...j_{q}}+T^{(k+1)i_{1}\cdots i_{p}}_{sj_{2}...j_{q}}+\cdots)(\frac{\partial (Ax)^{s}}{\partial x^{j_{1}}}+\frac{\partial \tilde{f}^{s}}{\partial x^{j_{1}}})+\cdots\notag\\&
+(T^{(k)i_{1}\cdots i_{p}}_{j_{1}\cdots j_{q-1}s}+T^{(k+1)i_{1}\cdots i_{p}}_{j_{1}\cdots j_{q-1}s}+\cdots)(\frac{\partial (Ax)^{s}}{\partial x^{j_{q}}}+\frac{\partial \tilde{f}^{s}}{\partial x^{j_{q}}})\notag\\&
-(T^{(k)li_{2}\cdots i_{p}}_{j_{1}\cdots j_{q}}+T^{(k+1)li_{2}\cdots i_{p}}_{j_{1}\cdots j_{q}}+\cdots)(\frac{\partial (Ax)^{i_{1}}}{\partial x^{l}}+\frac{\partial \tilde{f}^{i_{1}}}{\partial x^{l}})-\cdots\notag\\&
-(T^{(k)i_{1}\cdots i_{p-1}l}_{j_{1}\cdots j_{q}}+T^{(k+1)i_{1}\cdots i_{p-1}l}_{j_{1}\cdots j_{q}}+\cdots)(\frac{\partial (Ax)^{i_{p}}}{\partial x^{l}}+\frac{\partial \tilde{f}^{i_{p}}}{\partial x^{l}}).
\end{align}
Now, comparing the coefficients of the lowest powers of $x$ in both sides of (\ref{com1}), we get
\begin{align*}
(Ax)^{s}\frac{\partial}{\partial x^{s}}T^{(k)i_{1}\cdots i_{p}}_{j_{1}\cdots j_{q}}+T^{(k)i_{1}\cdots i_{p}}_{sj_{2}\cdots j_{q}}\frac{\partial (Ax)^{s}}{\partial x^{j_{1}}}+\cdots+T^{(k)i_{1}\cdots i_{p}}_{j_{1}\cdots j_{q-1}s}\frac{\partial (Ax)^{s}}{\partial x^{j_{q}}}&\notag\\
-T^{(k)li_{2}\cdots i_{p}}_{j_{1}\cdots j_{q}}\frac{\partial (Ax)^{i_{1}}}{\partial x^{l}}-\cdots-T^{(k)i_{1}\cdots i_{p-1}l}_{j_{1}\cdots j_{q}}\frac{\partial (Ax)^{i_{p}}}{\partial x^{l}}&=0,
\end{align*}
which implies that  $T^{(k)}$ is a tensor invariant of linear system (\ref{eq00}). The lemma is proved.

\begin{cor}\label{corollary1}
Assume system (\ref{eq1}) is linear. If $T$ is an analytic tensor invariant of (\ref{eq1}), then $T^{(r)}(r=k,k+1,\cdots,)$ is an analytic tensor invariant of system (\ref{eq1}).
\end{cor}

\end{proof}
\begin{thm}\label{th1}
If system (\ref{eq1}) possesses an analytic tensor invariant of type $(p,q)$ of order $k$ in a neighbourhood of $x=0$, then at least one of the following resonant condition holds:
\begin{equation}\label{eq4}
\sum_{j=1}^{n}k_{j}\lambda_{j}
=\lambda_{i_{1}}+\cdots+\lambda_{i_{p}}-\lambda_{j_{1}}-\cdots-\lambda_{j_{q}}, ~~i_{r}, j_{s}\in\{1,2,\cdots,n\},
\end{equation}
where $\lambda_1, \cdots,\lambda_n$ are eigenvalues of $A$, $k_{j} \in \mathbb{N}, \sum\limits_{j=1}^{n}k_{j}=k$.
\end{thm}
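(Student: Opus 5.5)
By Lemma \ref{lemma1}, the lowest-order part $T^{(k)}\not\equiv 0$ is a tensor invariant of the linear system (\ref{eq00}). The statement therefore reduces to a question about the linear operator $L_{Ax}$ acting on the finite-dimensional space $V_k$ of tensor fields of type $(p,q)$ whose components are homogeneous polynomials of degree $k$. The plan is to show that every eigenvalue of $L_{Ax}$ on $V_k$ has the form
\[
\sum_j k_j\lambda_j-\lambda_{i_1}-\cdots-\lambda_{i_p}+\lambda_{j_1}+\cdots+\lambda_{j_q},\qquad \sum_j k_j=k .
\]
Since $T^{(k)}\neq 0$ lies in the kernel, $0$ is an eigenvalue, and this gives (\ref{eq4}).

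First I will remove the dependence on coordinates. Under a linear change $x=Py$, system (\ref{eq00}) becomes $\dot y=P^{-1}APy$. The tensor $T^{(k)}$ transforms tensorially: its components are multiplied by entries of $P$ and $P^{-1}$ and composed with a linear map. This preserves homogeneity of degree $k$, and since the Lie derivative is coordinate-free, the invariance is preserved too. Hence I may assume $A=J$ is in Jordan normal form, upper triangular with diagonal $\lambda_1,\dots,\lambda_n$.

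Next I will choose a basis of $V_k$ consisting of the elements
\[
x^{m}\,\frac{\partial}{\partial x^{i_1}}\otimes\cdots\otimes\frac{\partial}{\partial x^{i_p}}\otimes dx^{j_1}\otimes\cdots\otimes dx^{j_q},
\]
where $x^m$ ranges over monomials with $|m|=k$. Using formula (\ref{eq2}), I split $J=D+N$ with $D$ diagonal and $N$ nilpotent, so that $L_{Jx}=L_{Dx}+L_{Nx}$. A direct computation shows that $L_{Dx}$ is diagonal on this basis, with eigenvalue
\[
\langle m,\lambda\rangle+\lambda_{j_1}+\cdots+\lambda_{j_q}-\lambda_{i_1}-\cdots-\lambda_{i_p}.
\]
For the nilpotent part, $L_{Nx}$ is strictly triangular with respect to a suitable total order on the basis. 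Since $N$ only has entries $N_{s,s+1}$, each term of $L_{Nx}$ moves a monomial exponent, an upper index or a lower index by one step in a fixed direction. Ordering the basis by a weight such as $\sum_s s\,m_s+\sum_r j_r-\sum_r i_r$ with lexicographic tie-breaks should make every such move strictly monotone. Then $L_{Jx}$ is triangular on $V_k$ and its eigenvalues are exactly the diagonal entries above.

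Verifying this triangularity is the main obstacle. The signs in (\ref{eq2}) are opposite for upper and lower indices, so I must check that a single weight orders all three kinds of moves in the same direction. If this bookkeeping becomes messy, I will use a cleaner alternative: $L_{Dx}$ and $L_{Nx}$ commute, because $[D,N]=0$ and $L$ is a Lie algebra homomorphism, $[L_X,L_Y]=L_{[X,Y]}$. Moreover $L_{Nx}$ is nilpotent on $V_k$, since it is the derivation induced by the nilpotent linear flow $e^{tN}$, which acts polynomially in $t$. A diagonalizable operator plus a commuting nilpotent one has the same spectrum as the diagonalizable part, so the spectrum of $L_{Jx}$ on $V_k$ is the set of values listed above. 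Either route finishes the proof, because $0$ must be among these values, which is exactly (\ref{eq4}).
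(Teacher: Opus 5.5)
Your proof is correct, but it handles the nilpotent part of the Jordan form differently from the paper. The paper conjugates by $C=\mathrm{diag}(1,\varepsilon,\dots,\varepsilon^{d_r-1})$, so that the linear field becomes $(B+\varepsilon\tilde B)v$ with $B$ diagonal. It expands the transformed invariant as a Laurent polynomial in $\varepsilon$ and notes that the lowest-order coefficient is a nonzero invariant of $\dot v=Bv$. For a diagonal field the Lie derivative acts on each component separately, by $\psi\mapsto (Bv)^s\partial_s\psi+(\lambda_{j_1}+\cdots+\lambda_{j_q}-\lambda_{i_1}-\cdots-\lambda_{i_p})\psi$, and comparing monomial coefficients gives (\ref{eq4}).

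You instead determine the whole spectrum of $L_{Jx}$ on $V_k$, and both of your routes work. For the triangular route, your worry about signs is unfounded. Each term of $L_{Nx}$ replaces $x^s$ by $x^{s+1}$ in a monomial, $dx^{j}$ by $dx^{j+1}$, or $\frac{\partial}{\partial x^{i}}$ by $-\frac{\partial}{\partial x^{i-1}}$. Each such move raises $\sum_s s\,m_s+\sum_r j_r-\sum_r i_r$ by exactly one, so no tie-breaking is needed. For the commuting route, $[Dx,Nx]=(ND-DN)x=0$, and $L_{Nx}$ is nilpotent on $V_k$ because it generates the pullback by $e^{tN}$, which is polynomial in $t$.

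The power of $\varepsilon$ in the paper is a grading of exactly this kind, with positions counted inside each Jordan block. So the paper's argument amounts to observing that the lowest-weight component of an element of $\ker L_{Jx}$ is annihilated by $L_{Dx}$.

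Your spectral formulation gives more. Since $L_{Nx}$ preserves the eigenspaces of $L_{Dx}$, $\ker L_{Jx}$ lies in the zero eigenspace of $L_{Dx}$. Hence every nonzero coefficient of $T^{(k)}$ in Jordan coordinates yields a resonance, not only those of a lowest-order piece. Conversely, any resonance with $\sum_j k_j=k$ produces a nonzero degree-$k$ invariant of type $(p,q)$ of $\dot x=Ax$. The paper's route is more elementary: it uses neither $[L_X,L_Y]=L_{[X,Y]}$ nor the spectral fact about commuting semisimple and nilpotent operators.
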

\begin{proof}
Let $T$ be an analytic tensor invariant of type $(p,q)$ of order $k$ of system (\ref{eq1}), without loss of generality, we assume that $T$ has the form of \eqref{exp2}.
According to Lemma \ref{lemma1}, $T^{(k)}$ is an analytic tensor invariant of system (\ref{eq00}).

Since after a nonsingular linear transformation, $A$ can be changed to a Jordan canonical form, for simplicity, we assume that $A$ is a Jordan canonical form, i.e.,
\begin{equation}
A={\left(\begin{array}{cccc}
J_{1} &  &  &  \\
& J_{2} & & \\
 & &\ddots &  \\
 &  &  & J_{m}
\end{array}
\right)},~~~~
J_{r}={\left(\begin{array}{cccc}
\lambda_{r} & 1 & &   \\
 & \ddots & \ddots & \\
 & & \ddots  & 1 \\
 & &  & \lambda_{r}
\end{array}
\right)},
\end{equation}
where $J_{r}(r=1,\cdots,m)$ is a Jordan block with degree equal to $d_{r}, d_{1}+\cdots+d_{m}=n$, and $ \lambda_{1},\cdots,\lambda_{m}$ can be equal.

Make the following transformation of variables
\begin{equation}
x=Cv,
\end{equation}
where
\begin{equation}
C={\left(\begin{array}{cccc}
C_{1} &  &  &  \\
& C_{2} & & \\
 & &\ddots &  \\
 &  &  & C_{m}
\end{array}
\right)},
C_{r}={\left(\begin{array}{cccc}
1 &  & &   \\
 & \varepsilon &  & \\
 & & \ddots  &  \\
 & &  & \varepsilon^{d_{r}-1}
\end{array}
\right)},
\end{equation}
$\varepsilon > 0$ is a constant, then system (\ref{eq00}) can be rewritten as
\begin{equation}\label{eq03}
\dot{v}=(B+\varepsilon \tilde{B})v,
\end{equation}
where
\begin{equation}
B={\left(\begin{array}{cccc}
B_{1} &  &  &  \\
& B_{2} & & \\
 & &\ddots &  \\
 &  &  & B_{m}
\end{array}
\right)},
B_{r}={\left(\begin{array}{ccc}
\lambda_{r} &  &   \\
 &  \ddots  &  \\
 &  & \lambda_{r}
\end{array}
\right)},
\end{equation}
\begin{equation}
\tilde{B}={\left(\begin{array}{cccc}
\tilde{B}_{1} &  &  &  \\
& \tilde{B}_{2} & & \\
 & &\ddots &  \\
 &  &  & \tilde{B}_{m}
\end{array}
\right)},
\tilde{B}_{r}={\left(\begin{array}{cccc}
0 & 1 & &   \\
 & \ddots & \ddots & \\
 & & \ddots  &  1\\
 & &  & 0
\end{array}
\right)}.
\end{equation}
Obviously, $\tilde{T}^{(k)}(v,\varepsilon)=T^{(k)}(Cv)$ is an analytic tensor invariant of linear system (\ref{eq03}), that is,
\begin{equation}\label{eq04}
L_{H}\tilde{T}^{(k)}(v,\varepsilon)=0,
\end{equation}
where $H$ denotes the vector field of system (\ref{eq03}), i.e., $H(v)=(B+\varepsilon \tilde{B})v$.

Noticing that for any $i_{r}, j_{s}\in\{1,2,\cdots,n\}$,
\begin{equation}\nonumber
(\tilde{T}^{(k)})_{j_{1}\cdots j_{q}}^{i_{1}\cdots i_{p}}(v,\varepsilon)=(c_{j_{1}j_{1}}\cdots c_{j_{q}j_{q}}c^{-1}_{i_{1}i_{1}}\cdots c^{-1}_{i_{p}i_{p}})(T^{(k)})_{j_{1}\cdots j_{q}}^{i_{1}\cdots i_{p}}(Cv),
\end{equation}
where $c_{ij}$ are the elements of matrix $C$, $\tilde{T}^{(k)}(v,\varepsilon)$ has the form
\begin{equation}\label{eq004}
\tilde{T}^{(k)}(v,\varepsilon)=\varepsilon^{l}\tilde{T}^{(k)}_{l}(v)+\varepsilon^{l+1}\tilde{T}^{(k)}_{l+1}(v)\cdots+\varepsilon^{L}\tilde{T}^{(k)}_{L}(v),
\end{equation}
where $l$, $L$ are integers, $\tilde{T}^{(k)}_{r}(v)$ are tensor fields whose components are all homogeneous functions of degree $k$, and $\tilde{T}^{(k)}_{l}(v)\not\equiv 0$.
By (\ref{eq04}) and (\ref{eq004}), we have
\begin{align}\label{eqq004}
0=&((B+\varepsilon \tilde{B})v)^{s}\frac{\partial }{\partial v^{s}}(\varepsilon^{l}(\tilde{T}^{(k)}_{l})_{j_{1}\cdots j_{q}}^{i_{1}\cdots i_{p}}(v)+\cdots+\varepsilon^{L}(\tilde{T}^{(k)}_{L})_{j_{1}\cdots j_{q}}^{i_{1}\cdots i_{p}}(v))\notag
\\& +(\varepsilon^{l}(\tilde{T}^{(k)}_{l})_{sj_{2}\cdots j_{q}}^{i_{1}\cdots i_{p}}(v)+\cdots+\varepsilon^{L}(\tilde{T}^{(k)}_{L})_{sj_{2}\cdots j_{q}}^{i_{1}\cdots i_{p}}(v))\frac{\partial((B+\varepsilon\tilde{B})v)^{s}}{\partial v^{j_{1}}}+\cdots\notag
\\& +(\varepsilon^{l}(\tilde{T}^{(k)}_{l})_{j_{1}\cdots j_{q-1}s}^{i_{1}\cdots i_{p}}(v)+\cdots+\varepsilon^{L}(\tilde{T}^{(k)}_{L})_{j_{1}\cdots j_{q-1}s}^{i_{1}\cdots i_{p}}(v))\frac{\partial((B+\varepsilon\tilde{B})v)^{s}}{\partial v^{j_{q}}}\notag
\\& -(\varepsilon^{l}(\tilde{T}^{(k)}_{l})_{j_{1}\cdots j_{q}}^{si_{2}\cdots i_{p}}(v)+\cdots+\varepsilon^{L}(\tilde{T}^{(k)}_{L})_{j_{1}\cdots j_{q}}^{si_{2}\cdots i_{p}}(v))\frac{\partial((B+\varepsilon\tilde{B})v)^{i_{1}}}{\partial v^{s}}-\cdots\notag
\\& -(\varepsilon^{l}(\tilde{T}^{(k)}_{l})_{j_{1}\cdots j_{q}}^{i_{1}\cdots i_{p-1}s}(v)+\cdots+\varepsilon^{L}(\tilde{T}^{(k)}_{L})_{j_{1}\cdots j_{q}}^{i_{1}\cdots i_{p-1}s}(v))\frac{\partial((B+\varepsilon\tilde{B})v)^{i_{p}}}{\partial v^{s}}.
\end{align}
Since $\tilde{T}^{(k)}_{l}(v)\not\equiv 0$, there exists some $\hat{i}_{m}, \hat{j}_{n}\in\{1,2,\cdots,n\}$ such that $\psi(v)\triangleq(\tilde{T}^{(k)}_{l})_{\hat{j}_{1}\cdots \hat{j}_{q}}^{\hat{i}_{1}\cdots \hat{i}_{p}}(v)\neq 0$.
Now equating all the terms in (\ref{eqq004}) of the same order with respect to $\varepsilon$ to zero, we obtain
\begin{equation}\label{eq06}
(Bv)^{s}\frac{\partial }{\partial v^{s}}\psi(v)+\psi(v)\lambda_{\hat{j}_{1}}+\cdots+\psi(v)\lambda_{\hat{j}_{q}}-\psi(v)\lambda_{\hat{i}_{1}}-\cdots--\psi(v)\lambda_{\hat{i}_{p}}=0.
\end{equation}
Suppose
\begin{equation}\label{eq07}
\psi(v)=\sum_{k_{1}+\cdots+k_{n}=k}\psi_{k_{1}\cdots k_{n}}(v^{1})^{k_{1}}\cdots(v^{n})^{k_{n}}.
\end{equation}
Substituting (\ref{eq07}) into (\ref{eq06}), we get
\begin{equation}\label{eq10}
\begin{aligned}
& \sum_{k_{1}+\cdots+k_{n}=k}[(k_{1}+\cdots+k_{d_{1}})\lambda_{1}+\cdots+(k_{d_{1}+\cdots+d_{m-1}+1}k_{n})\lambda_{m})]\psi_{k_{1}\cdots k_{n}}(v^{1})^{k_{1}}\cdots(v^{n})^{k_{n}}
\\& +\lambda_{\hat{j}_{_{1}}}\psi_{k_{1}...k_{n}}(v^{1})^{k_{1}}\cdots(v^{n})^{k_{n}}+\cdots+\lambda_{\hat{j}_{q}}\psi_{k_{1}\cdots k_{n}}(v^{1})^{k_{1}}\cdots(v^{n})^{k_{n}}\\& -\lambda_{\hat{i}_{_{1}}}\psi_{k_{1}\cdots k_{n}}(v^{1})^{k_{1}}\cdots(v^{n})^{k_{n}}+\cdots-\lambda_{\hat{i}_{p}}\psi_{k_{1}\cdots k_{n}}(v^{1})^{k_{1}}\cdots(v^{n})^{k_{n}}=0.
\end{aligned}
\end{equation}
After the rearrangement, (\ref{eq10}) becomes
\begin{equation*}
\sum_{k_{1}+\cdots+k_{n}=k}(k_{1}\lambda_{1}+\cdots+k_{n}\lambda_{n}+\lambda_{\hat{j}_{1}}+\cdots+\lambda_{\hat{j}_{q}}
-\lambda_{\hat{i}_{1}}-\cdots-\lambda_{\hat{i}_{p}})\psi_{k_{1}\cdots k_{n}}(v^{1})^{k_{1}}\cdots(v^{n})^{k_{n}}=0.
\end{equation*}
 Therefore, a resonant condition of type (\ref{eq4}) has to be fulfilled for any nonzero coefficient $\psi_{k_{1}\cdots k_{n}}$.
\end{proof}
\begin{remark}
(1) If det $A=0$, then at least one resonant relation of type (\ref{eq4}) holds. In fact, if $\lambda_{1}=0$, we only need to take $k_{1}=k, k_{2}=\cdots=k_{n}=0$ and $i_1=\cdots=i_p=j_1=\cdots=j_q=1$.

(2) When $0\leq q< p$, (\ref{eq4}) possesses the solutions independent of values of $\lambda_{1},\cdots,\lambda_{n}$. In fact, any element in the set
\begin{equation*}
\Sigma=\{(k_{1},\cdots, k_{n},i_{1},\cdots,i_{p},j_{1},\cdots,j_{q})\ |\ k_{j}=\delta^{j}_{i_{q+1}}+\cdots+\delta^{j}_{i_{p}}, i_{1}=j_{1},\cdots,i_{q}=j_{q} \}
\end{equation*}
is a solution to (\ref{eq4}). 

(3) Theorem \ref{th1} can be viewed as a generalization of some known results.

When $p=0,q=0$, Theorem \ref{th1} coincides with some results about first integrals in \cite{ref2, ref23}.

When $p=1,q=0$, Theorem \ref{th1} coincides with some results about Lie symmetries in \cite{xx16}.

(4) The resonant relation must be satisfied when $k_1=\cdots=k_n=0, ~p=q$ and $i_1=j_1, \cdots, i_p=j_p$.
\end{remark}

\section{Tensor invariants for semi-quasihomogeneous systems}

\subsection{Quasihomogeneous systems and semi-quasihomogeneous systems}
First of all  recall some notions for quasihomogeneous systems and semi-quasihomogeneous systems. For more details, see \cite{ref2}.

Consider a system of differential equations
\begin{equation}\label{eq12}
\dot{x}=g(x),\ \ x=(x^{1},\cdots,x^{n})\in\mathbb{C}^n.
\end{equation}

\begin{definition}
Let $s_{1},\cdots,s_{n}\in\mathbb{N}$. If for any $\rho\in \mathbb{R}^{+}$, all the components of $g=(g^{1},\cdots,g^{n})$ satisfy
\begin{equation}\label{eq13}
g^{j}(\rho^{s_{1}}x^{1},\cdots,\rho^{s_{n}}x^{n})
=\rho^{s_{j}+m-1}g^{j}(x^{1},\cdots,x^{n}),
\end{equation}
then system (\ref{eq12}) is called a quasi-homogeneous one of degree $m$ with exponents $s_1,\cdots,s_n$, where $m\in\mathbb{N}, m>1$.
\end{definition}

\begin{definition}
  Let $g(x)=g_{m}(x)+\tilde{g}(x)$. If $g_{m}(x)$ is a quasi-homogeneous vector field of degree $m$ with exponents $s_1,\cdots,s_n\in\mathbb{N}$, and $\tilde{g}(x)$ is the sum of quasi-homogeneous vector fields of degree all larger than $m$ (positively semi-quasihomogeneous) or all less than $m$ (negatively semi-quasihomogeneous), then system (\ref{eq12}) is called semi-quasihomogeneous.
\end{definition}

Let $S={\rm diag}(s_{1},\cdots,s_{n})$ be a diagonal matrix. We denote
${\rm diag}(\rho^{s_{1}},\cdots,\rho^{s_{n}})$ by $\rho^{S}$.

If system (\ref{eq12}) is semi-quasihomogeneous, then under the transformation
\begin{equation}\label{eq22}
x\mapsto\rho^{S}x, ~~t\mapsto\rho^{-\alpha}t, ~~\alpha=\frac{1}{m-1},
\end{equation}
it becomes
$$
\dot{x}=g_{m}(x)+\tilde{g}(x,\rho),
$$
where $\tilde{g}(x,\rho)$ is a formal power series either with respect to $\rho$ (positive semi-quasihomogeneity) or with respect to $\rho^{-1}$ (negative semi-quasihomogeneity) without any constant term.

We first consider the quasihomogeneous cut of semi-quasihomogeneous system (\ref{eq12})
\begin{equation}\label{eq23}
\dot{x}=g_{m}(x).
\end{equation}
System (\ref{eq23}) possesses particular solutions in the form
\begin{equation}\label{eq24}
x_{0}(t)=t^{-H}c=t^{\alpha S}c,
\end{equation}
where $H=\alpha S$ and the balance $c\neq0$ has to satisfy the algebraic system of equations $Hc+g_{m}(c)=0$.
Under the change of variables
\begin{equation}\label{eq26}
x=t^{-H}(c+u),~~ \tau=\ln t,
\end{equation}
system (\ref{eq23}) reads
\begin{equation}\label{KE}
u'=Ku+\tilde{f}(u),
\end{equation}
where $'=\frac{d}{d\tau}$, $K=H+\frac{\partial g_{m}}{\partial x}(c)$ is the so-called Kovalevskaya matrix. Its eigenvalues are called Kovalevskaya exponents. According to \cite{xx1}, $\lambda=-1$ is one of Kovalevskaya exponents.

\subsection{Necessary condition for tensor invariants of semi-quasihomogeneous systems}
\begin{definition}\label{def3}
A tensor field $T$ of type $(p,q)$ is called a quasi-homogeneous tensor field of degree $l$ with exponents $s_{1},\cdots,s_{n}\in\mathbb{N}$, if for any $\lambda\in{\mathbb{R}}^{+}$,
\begin{equation}\label{quasi}
T_{j_{1}\cdots j_{q}}^{i_{1}\cdots i_{p}}(\lambda^{s_{1}}x^{1},\cdots,\lambda^{s_{n}}x^{n}) =\lambda^{l-s_{j_{1}}-\cdots-s_{j_{q}}+s_{i_{1}}+\cdots+s_{i_{p}}}T_{j_{1}\cdots j_{q}}^{i_{1}\cdots i_{p}}(x).
\end{equation}
\end{definition}
\begin{remark}
For given $s_1,\cdots,s_n\in\mathbb{N}$, different tensor fields of type $(p,q)$ with same component function may be classified into quasi-homogeneous tensor fields of  different degrees. For example, for $(s_1, s_2)=(1, 2)$, $x^1\frac{\partial}{\partial x^1}\otimes dx^1$ is a quasi-homogeneous tensor field of degree $1$, while $x^1\frac{\partial}{\partial x^2}\otimes dx^1$ is a quasi-homogeneous tensor field of degree $0$.


\end{remark}

We will search for analytic (resp. polynomial) tensor invariants of positively (resp. negatively) semi-quasihomogeneous system (\ref{eq12}). Noticing that an analytic (resp. polynomial) tensor invariant $T(x)$ can be rescaled with the aid of the matrix $S$, i.e., it can be rewritten as follows
\begin{align}
 T(x)& =T_{l}(x)+T_{l+1}(x)+T_{l+2}(x)+\cdots\label{tensor}\\
({\rm resp.~~} T(x)& =T_{l}(x)+T_{l-1}(x)+T_{l-2}(x)+\cdots+T_{l-L}(x), L<l),\label{tensorr}
\end{align}
where $T_{r}(x)$ is a quasi-homogeneous tensor field of type $(p,q)$ of degree $r$ with
\begin{align}\label{Tr}
(T_{r})^{i_{1}\cdots i_{p}}_{j_{1}\cdots j_{q}}(x)=\sum_{s_{1}k_{1}+\cdots+s_{n}k_{n}
=r-s_{j_{1}}-\cdots-s_{j_{q}}+s_{i_{1}}+\cdots+s_{i_{p}}}a_{k_{1}\cdots k_{n}}(x^{1})^{k_{1}}\cdots(x^{n})^{k_{n}}.
\end{align}
\begin{remark}\label{remark3}
The degree of a quasi-homogeneous tensor field mentioned above may be positive, zero or negative. Specially, if $s_1=\cdots=s_n=1$, then for a quasi-homogeneous tensor field of type $(p,q)$, its degree $l$ satisfies $l\geq q-p$.
\end{remark}
\begin{lemma}\label{le3}
If system (\ref{eq12}) is positively semi-quasihomogeneous, then
$$
L_{g}T_{r}(\rho^S x)=\rho^{r+m-1}(L_{g_m}T_r(x)+O(\rho)).
$$
If system (\ref{eq12}) is negatively semi-quasihomogeneous, then
$$
L_{g}T_{r}(\rho^S x)=\rho^{r+m-1}(L_{g_m}T_r(x)+O(\rho^{-1})).
$$
\end{lemma}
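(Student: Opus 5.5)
The plan is to compute each term of the Lie derivative formula \eqref{eq2} after the substitution $x\mapsto\rho^S x$, and to track the power of $\rho$ carried by every factor. First we record the scaling laws of the ingredients. By Definition \ref{def3}, $(T_r)^{i_1\cdots i_p}_{j_1\cdots j_q}(\rho^S x)=\rho^{r-\sigma_J+\sigma_I}(T_r)^{i_1\cdots i_p}_{j_1\cdots j_q}(x)$, where $\sigma_J=s_{j_1}+\cdots+s_{j_q}$ and $\sigma_I=s_{i_1}+\cdots+s_{i_p}$. Differentiating this identity in $x^{h}$ gives
$$(\partial_{h}T_r)(\rho^Sx)=\rho^{r-\sigma_J+\sigma_I-s_h}\,\partial_h T_r(x).$$
Similarly, write $g=g_m+\sum_{d}g_d$, where each $g_d$ is quasi-homogeneous of degree $d$ and $d>m$ (positive case) or $d<m$ (negative case). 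Then \eqref{eq13} gives $g_d^{h}(\rho^Sx)=\rho^{s_h+d-1}g_d^h(x)$, and differentiating yields $(\partial_l g_d^{h})(\rho^Sx)=\rho^{s_h-s_l+d-1}\partial_l g_d^h(x)$.

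Next we substitute these into \eqref{eq2}, using $g_d$ in place of $F$, one term at a time. The transport term gives $g_d^{h}\partial_hT_r\mapsto\rho^{s_h+d-1}\rho^{r-\sigma_J+\sigma_I-s_h}=\rho^{r+d-1-\sigma_J+\sigma_I}$. Now take a lower-index term, for instance $(T_r)^{I}_{k j_2\cdots j_q}\partial_{j_1}g_d^k$. Here $T_r$ carries $\rho^{r-(\sigma_J-s_{j_1}+s_k)+\sigma_I}$ and the derivative carries $\rho^{s_k-s_{j_1}+d-1}$. Their product is again $\rho^{r+d-1-\sigma_J+\sigma_I}$, because the dummy index $s_k$ cancels. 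The upper-index terms work the same way: the dummy $s_l$ cancels and $s_{i_1}$ is restored. So each component satisfies
$$(L_{g_d}T_r)^I_J(\rho^Sx)=\rho^{r+d-1}\cdot\rho^{\sigma_I-\sigma_J}(L_{g_d}T_r)^I_J(x).$$
In other words, $L_{g_d}T_r$ is a quasi-homogeneous tensor field of degree $r+d-1$.

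Finally we sum over $d$, using linearity of $L_g$ in $g$. The factor $\rho^{\sigma_I-\sigma_J}$ is the same componentwise normalization that appears in \eqref{quasi}. We interpret the statement's $L_gT_r(\rho^Sx)$ in that normalized sense, or equivalently as the rescaled tensor in the new coordinates of \eqref{eq22}. The $d=m$ term then gives $\rho^{r+m-1}L_{g_m}T_r(x)$. The remaining terms give $\rho^{r+m-1}\rho^{d-m}(\cdots)$, where $d-m\ge1$ in the positive case and $d-m\le-1$ in the negative case. Since the degrees are integers, these remainders are $O(\rho)$ and $O(\rho^{-1})$ respectively.

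The main obstacle is bookkeeping rather than ideas. The exponents must be checked so that the dummy-index weights cancel in every one of the $p+q+1$ terms of \eqref{eq2}, and the normalization factor $\rho^{\sigma_I-\sigma_J}$ must be made consistent with what the lemma intends. In the positive case $\tilde g$ may be an infinite series. There we also note that $\tilde g(x,\rho)$ is a convergent or formal power series in $\rho$, as stated after \eqref{eq22}, so the remainder is legitimately $O(\rho)$ termwise.
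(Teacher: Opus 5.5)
Your proposal is correct and follows essentially the same route as the paper. Both arguments track the power of $\rho$ in each term of \eqref{eq2}, observe that the dummy-index weights cancel to leave the common factor $\rho^{r+m-1-\sigma_J+\sigma_I}$, and absorb $\rho^{\sigma_I-\sigma_J}$ into the rescaled basis $\partial/\partial(\rho^{s_i}x^i)$, $d(\rho^{s_j}x^j)$. Your splitting of $\tilde g$ into quasi-homogeneous pieces $g_d$ is a minor refinement: it makes explicit that $L_{g_d}T_r$ has degree $r+d-1$, and that the integrality of $d-m$ is what produces the $O(\rho)$ (resp.\ $O(\rho^{-1})$) remainder, which the paper leaves implicit in $\tilde g(x,\rho)$.
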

\begin{proof}
If system (\ref{eq12}) is positively semi-quasihomogeneous, then for any $i_{r}, j_{s}\in\{1,\cdots,n\}$,
\begin{align}\label{eq2222}
 (L_{g}T_r)^{i_{1}\cdots i_{p}}_{j_{1}\cdots j_{q}}(x) = &g^{k}\frac{\partial}{\partial x^{k}}T^{i_{1}\cdots i_{p}}_{j_{1}\cdots j_{q}}(x)+T^{i_{1}\cdots i_{p}}_{kj_{2}\cdots j_{q}}\frac{\partial g^{k}}{\partial x^{j_{1}}}+\cdots+T^{i_{1}\cdots i_{p}}_{j_{1}\cdots j_{q-1}k}\frac{\partial g^{k}}{\partial x^{j_{q}}}
 \notag\\
 & -T^{li_{2}\cdots i_{p}}_{j_{1}\cdots j_{q}}\frac{\partial g^{i_{1}}}{\partial x^{l}}-\cdots-T^{i_{1}\cdots i_{p-1}l}_{j_{1}\cdots j_{q}}\frac{\partial g^{i_{p}}}{\partial x^{l}}.
 \end{align}
By (\ref{eq13}) and (\ref{quasi}), we have
\begin{align*}
\frac{\partial g_m^k}{\partial x^{j}}(\rho^S x)=&\rho^{-s_{j}+s_{k}+m-1}\frac{\partial g_m^k}{\partial x^{j}}(x),\\
\frac{\partial (T_r)_{j_{1}\cdots j_{q}}^{i_{1}\cdots i_{p}}}{\partial x^{j}}(\rho^S x) =&\rho^{-s_{j}+r-s_{j_{1}}-\cdots-s_{j_{q}}+s_{i_{1}}+\cdots+s_{i_{p}}}\frac{\partial (T_r)_{j_{1}\cdots j_{q}}^{i_{1}\cdots i_{p}}}{\partial x^{j}}(x).
\end{align*}
Therefore,
\begin{align}
&(L_{g}T_{r})^{i_{1}\cdots i_{p}}_{j_{1}\cdots j_{q}}(\rho^S x)\notag\\
=& \rho^{s_{k}+m-1}(g_{m}^{k}(x)+\tilde{g}^k(x,\rho))\rho^{-s_{k}+r-s_{j_{1}}-\cdots-s_{j_{q}}+s_{i_{1}}+\cdots+s_{i_{p}}}\frac{\partial}{\partial x^{k}}(T_{r})^{i_{1}\cdots i_{p}}_{j_{1}\cdots j_{q}}(x)\notag
\\& +\rho^{r-s_{k}-s_{j_{2}}-\cdots-s_{j_{q}}+s_{i_{1}}+\cdots+s_{i_{p}}}(T_{r})^{i_{1}\cdots i_{p}}_{kj_{2}\cdots j_{q}}(x)\notag
\rho^{s_{k}+m-1-s_{j_{1}}}\frac{\partial (g_{m}^{k}(x)+ \tilde{g}^{k}(x,\rho))}{\partial x^{j_{1}}}\notag
\\&+\cdots\notag
\\& +\rho^{r-s_{j_{1}}-\cdots-s_{j_{q-1}}-s_{k}+s_{i_{1}}+\cdots+s_{i_{p}}}(T_{r})^{i_{1}\cdots i_{p}}_{j_{1}\cdots j_{q-1}k}(x)\rho^{s_{k}+m-1-s_{j_{q}}}\frac{\partial (g_{m}^{k}(x)+ \tilde{g}^{k}(x,\rho))}{\partial x^{j_{q}}}\notag
\\& -\rho^{r-s_{j_{1}}-\cdots-s_{j_{q}}++s_{l}+s_{i_{2}}+\cdots+s_{i_{p}}}((T_{r})^{li_{2}\cdots i_{p}}_{j_{1}\cdots j_{q}}(x)\rho^{s_{i_{1}}+m-1-s_{l}}\frac{\partial (g_{m}^{i_{1}}(x)+\tilde{g}^{i_{1}}(x,\rho))}{\partial x^{l}}\notag
\\&-\cdots\notag
\\& -\rho^{r-s_{j_{1}}-\cdots-s_{j_{q}}++s_{1}+\cdots+s_{i_{p-1}}+s_{l}}(T_{r})^{i_{1}\cdots i_{p-1}l}_{j_{1}\cdots j_{q}}(x)\rho^{s_{i_{1}}+m-1-s_{l}}\frac{\partial (g_{m}^{i_{p}}(x)+\tilde{g}^{i_{p}}(x,\rho))}{\partial x^{l}}\notag
\\ =& \rho^{r+m-1-s_{j_{1}}-...-s_{j_{q}}+s_{i_{1}}+...+s_{i_{p}}}
((L_{g_m}T_r)^{i_{1}\cdots i_{p}}_{j_{1}\cdots j_{q}}(x)+O(\rho))\notag,
\end{align}
which leads to
\begin{align*}
&L_{g}T_{r}(\rho^{S}x)\notag
\\=& (L_{g}T_{r})^{i_{1}\cdots i_{p}}_{j_{1}\cdots j_{q}}(\rho^S x)\frac{\partial}{\partial (\rho^{s_{i_{1}}}x^{i_{1}})}\otimes\cdots\otimes\frac{\partial}{\partial (\rho^{s_{i_{p}}}x^{i_{p}})}\otimes d(\rho^{s_{j_{1}}}x^{j_{1}})\otimes\cdots\otimes d(\rho^{s_{j_{q}}}x^{j_{q}})
\\=& \rho^{r+m-1}(L_{g_m}T_r(x)+O(\rho)).
\end{align*}
The case of negatively semi-quasihomogeneous can be proved similarly.
\end{proof}
\begin{lemma}\label{lemma3}
Assume system (\ref{eq12}) is positively (resp. negatively) semi-quasihomogeneous. If $T(x)$ given by \eqref{tensor}(resp. \eqref{tensorr}) is an analytic (resp. polynomial) tensor invariant of (\ref{eq12}), then $T_{l}$ is a quasi-homogeneous tensor invariant of the truncated system (\ref{eq23}).
\end{lemma}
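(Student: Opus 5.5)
The plan is to substitute the rescaling $x\mapsto\rho^{S}x$ into the identity $L_gT=0$, apply Lemma \ref{le3} term by term to the quasi-homogeneous expansion \eqref{tensor} (resp. \eqref{tensorr}), and read off the leading power of $\rho$. This is the same leading-order mechanism as in Lemma \ref{lemma1}, with ordinary homogeneity replaced by quasi-homogeneity.

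First consider the positively semi-quasihomogeneous case. Since $T=T_l+T_{l+1}+\cdots$ is a tensor invariant, $L_gT(\rho^S x)=0$ for all small $\rho>0$ and all $x$ in a neighbourhood where the series converges. Because $L_g$ is linear in $T$, we have
\begin{equation*}
0=L_gT(\rho^Sx)=\sum_{r\ge l}L_gT_r(\rho^Sx).
\end{equation*}
By Lemma \ref{le3}, each summand equals $\rho^{r+m-1}\bigl(L_{g_m}T_r(x)+O(\rho)\bigr)$. The $r=l$ term therefore contributes $\rho^{l+m-1}L_{g_m}T_l(x)$ at order $\rho^{l+m-1}$. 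The $O(\rho)$ remainder from $r=l$ and every term with $r\ge l+1$ are of order at least $\rho^{l+m}$. Dividing by $\rho^{l+m-1}$ and letting $\rho\to0$ gives $L_{g_m}T_l(x)=0$. Equivalently, we equate to zero the coefficient of the lowest power of $\rho$ in the formal expansion. By Definition \ref{def3}, $T_l$ is quasi-homogeneous, and it is nonzero because $l$ is chosen as the lowest degree present. Hence $T_l$ is a quasi-homogeneous tensor invariant of \eqref{eq23}.

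Two points need care. The first is that the tensor $L_gT_r(\rho^Sx)$ in Lemma \ref{le3} is expressed in the rescaled frame $\partial/\partial(\rho^{s_i}x^i)$, $d(\rho^{s_j}x^j)$. Comparison is legitimate because this frame is the same for all $r$, so we may compare componentwise. The second, and the main obstacle, is justifying the interchange of the infinite sum over $r$ with the asymptotic expansion in $\rho$. My approach would be to argue formally: $T$ is analytic, so $T(\rho^Sx)$ and $g(\rho^Sx)$ are convergent power series in $\rho$ for small $\rho$ with fixed $x$. The Lie derivative is computed with finitely many products and derivatives of these series, so the coefficient of each power $\rho^N$ in $L_gT(\rho^Sx)$ is a finite combination. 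Collecting the coefficient of $\rho^{l+m-1}$, only $T_l$ and $g_m$ contribute, and that coefficient is exactly $L_{g_m}T_l(x)$. Note that $\tilde g$ is a sum of quasi-homogeneous pieces of degree greater than $m$, so after rescaling it only shifts powers of $\rho$ upward.

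In the negatively semi-quasihomogeneous case, $T=T_l+T_{l-1}+\cdots+T_{l-L}$ is a finite sum. Lemma \ref{le3} now gives each term as $\rho^{r+m-1}\bigl(L_{g_m}T_r+O(\rho^{-1})\bigr)$, and $\tilde g$ contributes powers $\rho^{-1},\rho^{-2},\dots$. The highest power of $\rho$ is therefore $\rho^{l+m-1}$, and its coefficient is again $L_{g_m}T_l(x)$. We divide by $\rho^{l+m-1}$ and let $\rho\to+\infty$; equivalently, we compare coefficients of the top power in the expansion in $\rho^{-1}$. Finiteness of the sum makes this step straightforward. The same argument gives $L_{g_m}T_l=0$.
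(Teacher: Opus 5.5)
Your proposal is correct and follows the paper's own argument. The paper likewise writes $0=L_gT(\rho^Sx)=\rho^{l+m-1}L_{g_m}T_l(x)+O(\rho^{l+m})$ via Lemma \ref{le3}, concludes $L_{g_m}T_l=0$ from the arbitrariness of $\rho$, and calls the negative case similar; your remarks on the fixed rescaled frame, on collecting powers of $\rho$ across the infinite sum, and on the top-power argument in the negative case make explicit what the paper leaves implicit.
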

\begin{proof} Here we take the situation of positively semi-quasihomogeneous as an example, the other situation is similar to it.
According to Lemma \ref{le3},
$$
0= L_{g}T(\rho^{S}x)
= L_{g}(T_{l}(\rho^{S}x)+T_{l+1}(\rho^{S}x)+\cdots)
= \rho^{l+m-1}L_{g_m}T_l(x)+O(\rho^{l+m}).
$$
By the arbitrariness of $\rho$, we obtain
$$
L_{g_m}T_{l}=0,
$$
which means $T_{l}$ is a quasi-homogeneous tensor invariant of system (\ref{eq23}).
\end{proof}

Similarly, we can obtain the following conclusion.
\begin{lemma}\label{lemma4}
Assume system (\ref{eq12}) is quasi-homogeneous. If $T(x)$ is an analytic tensor invariant of (\ref{eq12}), then $T_{r}(r\geq l)$ is a quasi-homogeneous tensor invariant of system (\ref{eq12}).
\end{lemma}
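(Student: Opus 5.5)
We prove Lemma \ref{lemma4} by combining the scaling identity of Lemma \ref{le3} with linear independence of distinct powers of $\rho$, as in Corollary \ref{corollary1}. Since system (\ref{eq12}) is quasi-homogeneous, we have $g=g_m$ and $\tilde g\equiv 0$. Repeating the computation in the proof of Lemma \ref{le3} with $\tilde g=0$, no $O(\rho)$ remainder appears. The identity is therefore exact:
$$
L_{g}T_{r}(\rho^{S}x)=\rho^{r+m-1}\,L_{g_m}T_r(x)
$$
for every $\rho\in\mathbb{R}^{+}$ and every $r$. Equivalently, $L_{g_m}$ sends a quasi-homogeneous tensor field of degree $r$ to a quasi-homogeneous tensor field of degree $r+m-1$. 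This follows from the exponent bookkeeping in (\ref{eq2222}) together with (\ref{eq13}) and (\ref{quasi}).

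Next, we expand the invariant as in (\ref{tensor}), $T=T_l+T_{l+1}+\cdots$, which is convergent near the origin. The invariance $L_gT=0$ gives
$$
0=L_{g}T(\rho^{S}x)=\sum_{r\geq l}\rho^{r+m-1}L_{g_m}T_r(x)
$$
for all small $\rho>0$ and fixed $x$ in a neighbourhood of $0$. Termwise application of $L_g$ is justified because $L_g$ is a first-order differential operator with analytic coefficients, and analytic series may be differentiated termwise. Equivalently, we can argue componentwise: the component $(L_gT)^{i_1\cdots i_p}_{j_1\cdots j_q}$ is an analytic function. Its quasi-homogeneous parts of degree $r+m-1-s_{j_1}-\cdots-s_{j_q}+s_{i_1}+\cdots+s_{i_p}$ come exactly from $L_{g_m}T_r$, and these are grouped monomials of the Taylor series.

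The right-hand side is then a power series in $\rho$ (with a common factor $\rho^{m-1}$) that vanishes identically on an interval. By uniqueness of power series coefficients, each coefficient must vanish, so $L_{g_m}T_r(x)=0$ for every $r\geq l$. Hence each $T_r$ is a quasi-homogeneous tensor invariant of (\ref{eq12}).

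The only delicate point is the justification of the regrouping. One must check that the decomposition of the Taylor series of each component into quasi-homogeneous pieces is compatible across components, since the exponent shift in (\ref{Tr}) depends on the indices. The cleanest way to handle this is the componentwise argument: fix indices $i_1,\dots,i_p,j_1,\dots,j_q$, regard both sides as analytic in $x$, and compare quasi-homogeneous parts of equal weight. Distinct $r$ give distinct weights for that fixed component, so the pieces cannot cancel against each other. The remaining steps are routine.
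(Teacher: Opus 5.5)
Your proof is correct and is essentially the argument the paper has in mind when it says Lemma \ref{lemma4} follows ``similarly'' to Lemma \ref{lemma3}. With $\tilde g\equiv 0$ the scaling identity of Lemma \ref{le3} is exact, so $0=\sum_{r\ge l}\rho^{r+m-1}L_{g_m}T_r(x)$, and comparing powers of $\rho$ (equivalently, weighted-homogeneous parts of each fixed component, as you do) forces $L_{g_m}T_r=0$ for every $r\ge l$; if $l+m-1<0$ the right-hand side is $\rho^{l+m-1}$ times a power series rather than a power series, which does not affect the coefficient comparison.
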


Now we give a necessary condition for existence of tensor invariants in quasihomogeneous systems.

\begin{thm}\label{th2}
Assume system (\ref{eq12}) is semi-quasihomogeneous. If  (\ref{eq12}) possesses an analytic tensor invariant of type $(p,q)$ in a neighbourhood of the particular solution (\ref{eq24}), then the truncated system (\ref{eq23}) admits a quasi-homogeneous tensor invariant of degree $l$, and at least one of the following resonant condition holds:
\begin{equation}\label{eq032}
-\frac{l}{m-1} +\sum_{j=1}^{n}k_{j}\lambda_{j}=\lambda_{i_{1}}+\cdots+\lambda_{i_{p}}-\lambda_{j_{1}}-\cdots-\lambda_{j_{q}},
\ \ k_{j} \in \mathbb{N},
\end{equation}
where $\lambda_{1},\cdots,\lambda_{n}$ are Kovalevskaya exponents of (\ref{eq23}), $m$ is the degree of (\ref{eq23}), and $i_{r}, j_{s}\in\{1,\cdots,n\}$.
Moreover, if $l\geqslant0$, then \eqref{eq032} can be rewritten as
\begin{equation}\label{eq32}
\sum_{j=1}^{n}k_{j}\lambda_{j}=(m-1)(\lambda_{i_{1}}+\cdots+\lambda_{i_{p}}-\lambda_{j_{1}}-\cdots-\lambda_{j_{q}}),
~~k_{j} \in \mathbb{N},~~k_1\geqslant l.
\end{equation}
\end{thm}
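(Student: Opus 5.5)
First I use Lemma \ref{lemma3} to pass from the full system to its quasihomogeneous cut. After rescaling with $\rho^S$, the analytic invariant $T$ of (\ref{eq12}) splits as in \eqref{tensor} (or \eqref{tensorr}) with a nonzero leading part $T_l$. Lemma \ref{lemma3} says $T_l$ is a quasi-homogeneous tensor invariant of degree $l$ of the truncated system (\ref{eq23}), which proves the first assertion. From here on I only need to show that a nonzero quasi-homogeneous invariant $T_l$ of $\dot x=g_m(x)$ forces (\ref{eq032}).

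Next I transport $T_l$ along the particular solution using the change of variables (\ref{eq26}), $x=t^{-H}(c+u)$ with $\tau=\ln t$. This is a time-dependent map. To keep the Lie-derivative formalism, I will handle it in the following way.
\begin{itemize}
\item Use quasi-homogeneity (Definition \ref{def3}) with $\rho=t^{\alpha}$ to write
\begin{align*}
(T_l)^{i_1\cdots i_p}_{j_1\cdots j_q}(t^{\alpha S}y)=t^{\alpha(l-s_{j_1}-\cdots-s_{j_q}+s_{i_1}+\cdots+s_{i_p})}(T_l)^{i_1\cdots i_p}_{j_1\cdots j_q}(y),
\end{align*}
where $y=c+u$.
\item Pull back by the linear map $y\mapsto t^{\alpha S}y$. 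This contributes factors $t^{\alpha s_j}$ for each lower index and $t^{-\alpha s_i}$ for each upper index, which cancel the index-dependent exponents.
\item The pulled-back tensor is then $\hat T(u,\tau)=e^{\alpha l\tau}\,T_l(c+u)$, with components taken in the $y$-coordinates.
\end{itemize}
Invariance $L_{g_m}T_l=0$ becomes $\partial_\tau\hat T+L_{G}\hat T=0$, where $G(u)=Ku+\tilde f(u)$ is the autonomous field of (\ref{KE}). Substituting the exponential factor gives the equation
\begin{align*}
L_G\big(T_l(c+u)\big)=-\alpha l\,T_l(c+u).
\end{align*}
So $T_l(c+\cdot)$ is an eigen-tensor of $L_G$ with eigenvalue $-l/(m-1)$.

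Then I repeat the argument of Lemma \ref{lemma1} and Theorem \ref{th1} with this eigenvalue. I expand $T_l(c+u)$ in Taylor series at $u=0$, where it is analytic because $T_l$ is analytic near $c$ (polynomial in the negative case). Let $k$ be the lowest order with a nonzero homogeneous part $T^{(k)}$. Comparing lowest orders shows that $T^{(k)}$ satisfies the same eigen-equation for the linear field $Ku$. Next I bring $K$ to Jordan form and apply the $\varepsilon$-rescaling $C$ exactly as in Theorem \ref{th1}, which isolates a nonzero component $\psi$ satisfying
\begin{align*}
(Bv)^s\partial_s\psi+\Big(\sum\lambda_{j_r}-\sum\lambda_{i_r}\Big)\psi=-\tfrac{l}{m-1}\psi .
\end{align*}
For each nonzero monomial coefficient this yields (\ref{eq032}).

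For the second form (\ref{eq32}) when $l\ge0$, I use the fact that $-1$ is a Kovalevskaya exponent, say $\lambda_1=-1$. Then $-l/(m-1)=\frac{l}{m-1}\lambda_1$. I expect this step to be the main obstacle, because reconciling the non-integer factor $l/(m-1)$ with the integer multiplier and the bound $k_1\ge l$ is delicate. My plan is the following.
\begin{itemize}
\item First multiply (\ref{eq032}) by $m-1$, so the left side becomes $(m-1)\sum_j k_j\lambda_j+l\lambda_1$.
\item Relabel $k_1\mapsto(m-1)k_1+l$ and $k_j\mapsto(m-1)k_j$ for $j\ge2$. This gives integer coefficients with new $k_1\ge l$, which is (\ref{eq32}).
\end{itemize}
I will check that this relabeling stays in $\mathbb N$, which uses $l\ge0$. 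I will also double-check the sign conventions of the time-dependent pullback, since an error there would flip the sign of $l$.
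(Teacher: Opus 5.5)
\textbf{There is a genuine sign error, and it sits exactly where you feared.} Your overall route matches the paper's in substance. You pass to $T_l$ by Lemma \ref{lemma3}, transport it along $x_0(t)$, obtain an eigen-identity for $\hat T(u)=T_l(c+u)$ under $L_G$, and then rerun Theorem \ref{th1}. The paper instead packages the eigenvalue into an auxiliary variable $u^0=t^{-\delta\alpha}$, so that $(u^0)^{\delta l}\hat T$ is a genuine invariant of an $(n+1)$-dimensional system and Theorem \ref{th1} applies verbatim. Rerunning Lemma \ref{lemma1} and Theorem \ref{th1} for an eigen-tensor is equivalent. Your passage from \eqref{eq032} to \eqref{eq32} is exactly the paper's and is not delicate.

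The error is this. The change of variables \eqref{eq26} is $x=t^{-H}(c+u)=t^{-\alpha S}(c+u)$, so in Definition \ref{def3} you must take $\rho=t^{-\alpha}$, not $t^{\alpha}$. The second equality in \eqref{eq24} is a misprint: the balance equation $Hc+g_m(c)=0$ and the autonomous form \eqref{KE} both require $t^{-\alpha S}$. With $x=t^{\alpha S}(c+u)$ one gets $u'=e^{2\tau}g_m(c+u)-H(c+u)$, so the $G$ you use is not the vector field of your own change of variables. The correct pullback is $e^{-\alpha l\tau}T_l(c+u)$, and your (correct) relation $\partial_\tau\hat T+L_G\hat T=0$ then gives
\[
L_G\,T_l(c+u)=+\frac{l}{m-1}\,T_l(c+u).
\]
This is also what the paper gets directly. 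It splits $G(u)=g_m(c+u)+H(c+u)$, uses $L_{g_m}\hat T=0$ by invariance, and uses $L_H\hat T=\alpha l\,\hat T$ by Euler's identity for quasi-homogeneous components. Your identity with $-\frac{l}{m-1}$ is false. For $\dot x=-x^2$ (so $m=2$, $c=1$, $G(u)=-u-u^2$) and the invariant $x^2\partial_x$ of degree $l=1$, one computes $L_G((1+u)^2\partial_u)=+(1+u)^2\partial_u$.

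There is a second slip that hides the first. Your displayed equation for $\psi$, read monomial by monomial, gives $\frac{l}{m-1}+\sum_j k_j\lambda_j=\sum_r\lambda_{i_r}-\sum_s\lambda_{j_s}$. That is not \eqref{eq032} unless $l=0$; you land on \eqref{eq032} only because this slip cancels the first one. In the example above, your version at order $k=0$ would read $1=-1$.

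With the corrected eigenvalue, the right-hand side of the $\psi$-equation is $+\frac{l}{m-1}\psi$, and each nonzero monomial coefficient yields exactly \eqref{eq032}. The rest of your argument then goes through: the lowest-order homogeneous part, the Jordan form with the $\varepsilon$-rescaling, and the relabeling $k_1\mapsto(m-1)k_1+l$, $k_j\mapsto(m-1)k_j$ for $l\geq 0$.
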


\begin{proof} Let $T(x)$ be an analytic tensor invariant of type $(p,q)$ of system (\ref{eq12}), then by Lemma \ref{lemma3}, the truncated system (\ref{eq23}) admits a quasi-homogeneous tensor invariant $T_{l}(x)$.


Under the transformation (\ref{eq26}),
$T_{l}(x)$ becomes
\begin{align*}
&(T_{l})(t^{-H}(c+u))\\
=&~ \sum_{1\leq i_{1},\cdots,i_{p},j_{1},\cdots,j_{q}\leq n}(T_{l})_{j_{1}\cdots j_{q}}^{i_{1}\cdots i_{p}}(t^{-H}(c+u)) \frac{\partial}{\partial t^{-h_{i_{1}}}(c^{i_{1}}+u^{i_{1}})}\otimes\cdots\otimes\frac{\partial}{\partial t^{-h_{i_{p}}}(c^{i_{p}}+u^{i_{p}})}\notag\\
&~ \otimes d(t^{-h_{j_{1}}}(c^{j_{1}}+u^{j_{1}}))\otimes\cdots\otimes d(t^{-h_{j_{q}}}(c^{j_{q}}+u^{j_{q}}))\notag\\[1mm]
=&~ \sum_{1\leq i_{1},\cdots,i_{p},j_{1},\cdots,j_{q}\leq n}t^{-\alpha l+h_{j_{1}}+\cdots+h_{j_{q}}-h_{i_{1}}-\cdots-h_{i_{p}}}(T_{l})_{j_{1}\cdots j_{q}}^{i_{1}\cdots i_{p}}(c+u)(t^{h_{i_{1}}}\frac{\partial}{\partial u^{i_{1}}})\otimes\cdots \otimes(t^{h_{i_{p}}}\frac{\partial}{\partial u^{i_{p}}})\notag\\[1mm]
&~\otimes(t^{-h_{j_{1}}}du^{j_{1}}-h_{j_{1}}t^{-h_{j_{1}}-1}(c^{j_{1}}+u^{j_{1}})dt)\otimes\cdots \otimes(t^{-h_{j_{q}}}du^{j_{q}}-h_{j_{q}}t^{-h_{j_{q}}-1}(c^{j_{q}}+u^{j_{q}})dt)\notag\\[1mm]
=&~ t^{-\alpha l}\hat{T}(u)-t^{-\alpha l-1}\bar{T}(t,u),
\end{align*}
where
$$
\hat{T}(u)=\sum_{1\leq i_{1},\cdots,i_{p},j_{1},\cdots,j_{q}\leq n}(T_{l})_{j_{1}\cdots j_{q}}^{i_{1}\cdots i_{p}}(c+u)\frac{\partial}{\partial u^{i_{1}}}\otimes\cdots\otimes\frac{\partial}{\partial u^{i_{p}}}\otimes du^{j_{1}}\otimes\cdots\otimes du^{j_{q}},
$$
and $\bar{T}(t,u)$ denotes all tensor fields of type $(p,q)$ whose basis contains $t$.

Let $\tilde{T}(u^{0}, u)=(u^{0})^{\delta l}\hat{T}(u)$, then $\tilde{T}(u^{0}, u)$ is a tensor field of type $(p,q)$ with
\begin{align*}
& \tilde{T}(u^{0}, u)^{i_{1}\cdots i_{p}}_{j_{1}...j_{q}}=(u^{0})^{\delta l}(T_{l})_{j_{1}\cdots j_{q}}^{i_{1}\cdots i_{p}}(c+u),~~i_{r}, j_{s}\in\{1,\cdots,n\},
\\& \tilde{T}(u^{0}, u)^{i_{1}\cdots i_{p}}_{j_{1}...j_{q}}=0,~~i_{1}\cdots i_{p} j_{1}\cdots j_{q}=0.
\end{align*}
We claim that $\tilde{T}(u^{0},u)$
is an analytic tensor invariant of system
\begin{align}\label{new}
(u^{0})'=-\delta\alpha u^{0},~~u'=Ku+\tilde{f}(u),
\end{align}
where $u^{0}=t^{-\delta\alpha}=e^{-\delta\alpha\tau}$, $\delta={\rm sgn}(l)$, which means
\begin{align}\label{de}
(L_{\tilde{g}}\tilde{T}(u^0,u))^{i_{1}\cdots i_{p}}_{j_{1}...j_{q}}
=0
\end{align}
holds for any $i_{r}, j_{s}\in\{0,1,\cdots,n\}$, where $\tilde{g}$ denotes the vector field of (\ref{new}).

If $i_{1}\cdots i_{p} j_{1}\cdots j_{q}=0$, without loss of generality, we assume $i_{1}=0$, which means $\tilde{T}(u^0,u)^{0i_{2}...i_{p}}_{j_{1}...j_{q}}=0$, then by (\ref{eq2}), we have
\begin{equation}\nonumber
(L_{\tilde{g}}\tilde{T}(u^{0},u))^{0i_{2}\cdots i_{p}}_{j_{1}...j_{q}} =-\sum_{k=0}^{n}\tilde{T}(u^{0},u))^{ki_{2}\cdots i_{p}}_{j_{1}\cdots j_{q}}
\frac{\partial(-\delta\alpha u^{0})}{\partial u^{k}}=0.
\end{equation}

If $i_{r}, j_{s}\in\{1,\cdots,n\}$, then (\ref{de}) is equivalent to
\begin{align}
(L_{\tilde{g}}\tilde{T}(u^0,u))^{i_{1}\cdots i_{p}}_{j_{1}...j_{q}}
=&-\delta\alpha u^{0}\delta l(u^{0})^{\delta l-1}(T_{l})^{i_{1}\cdots i_{p}}_{j_{1}...j_{q}}(c+u)+(u^{0})^{\delta l}(L_{G}\hat{T}(u))^{i_{1}\cdots i_{p}}_{j_{1}...j_{q}}\notag
\\
=&(u^{0})^{\delta l}[-\alpha l(T_{l})^{i_{1}\cdots i_{p}}_{j_{1}...j_{q}}(c+u)+(L_{G}\hat{T}(u))^{i_{1}\cdots i_{p}}_{j_{1}...j_{q}}]=0,\notag
\end{align}
where $G(u)=Ku+f(u)$. By noticing that
$$
Ku+\tilde{f}(u)=g_{m}(c+u)+H(c+u)
$$
and linear property of Lie derivative,
\begin{align}\label{LG}
(L_{G}\hat{T}(u))^{i_{1}\cdots i_{p}}_{j_{1}...j_{q}}=(L_{g_m}\hat{T}(u))^{i_{1}\cdots i_{p}}_{j_{1}...j_{q}}+(L_{H}\hat{T}(u))^{i_{1}\cdots i_{p}}_{j_{1}...j_{q}}.
\end{align}

Since $T_{l}(x)$ is a quasi-homogeneous tensor invariant of system (\ref{eq23}), we have
\begin{align*}
g_{m}^{k}(x)\frac{\partial}{\partial x^{k}}(T_{l})^{i_{1}\cdots i_{p}}_{j_{1}\cdots j_{q}}(x)+(T_{l})^{i_{1}\cdots i_{p}}_{kj_{2}\cdots j_{q}}(x)\frac{\partial g_{m}^{k}(x)}{\partial x^{j_{1}}}+\cdots
+(T_{l})^{i_{1}\cdots i_{p}}_{j_{1}\cdots j_{q-1}k}(x)\frac{\partial g_{m}^{k}(x)}{\partial x^{j_{q}}}~&\notag
\\ -(T_{l})^{ki_{2}\cdots i_{p}}_{j_{1}...j_{q}}(x)\frac{\partial g^{i_{1}}(x)}{\partial x^{k}}
-\cdots-(T_{l})^{i_{1}\cdots i_{p-1}k}_{j_{1}\cdots j_{q}}(x)\frac{\partial g^{i_{p}}(x)}{\partial x^{k}}=&~0,
\end{align*}
therefore
\begin{align}\label{gm}
(L_{g_m}\hat{T}(u))^{i_{1}\cdots i_{p}}_{j_{1}...j_{q}}=&~g_{m}^{k}(c+u)\frac{\partial}{\partial u^{k}}(T_{l})^{i_{1}\cdots i_{p}}_{j_{1}\cdots j_{q}}(c+u)+(T_{l})^{i_{1}\cdots i_{p}}_{kj_{2}\cdots j_{q}}(c+u)\frac{\partial g_{m}^{k}(c+u)}{\partial u^{j_{1}}}+\cdots\notag\\
&~ +(T_{l})^{i_{1}\cdots i_{p}}_{j_{1}\cdots j_{q-1}k}(c+u)\frac{\partial g_{m}^{k}(c+u)}{\partial u^{j_{q}}}
-(T_{l})^{ki_{2}\cdots i_{p}}_{j_{1}\cdots j_{q}}(c+u)\frac{\partial g^{i_{1}}(c+u)}{\partial u^{k}}\notag\\
&~ -\cdots-(T_{l})^{i_{1}\cdots i_{p-1}k}_{j_{1}\cdots j_{q}}(c+u)\frac{\partial g^{i_{p}}(c+u)}{\partial u^{k}}\notag\\
=&~0.
\end{align}

Assume $H={\rm diag}(h_1,\cdots, h_n)$, then $h_i=\alpha s_i (i=1,2,\cdots,n)$,
\begin{align}\label{h1}
&(L_{H}\hat{T}(u))^{i_{1}\cdots i_{p}}_{j_{1}...j_{q}}\notag\\
=&(H(c+u))^{k}\frac{\partial (T_{l})^{i_{1}\cdots i_{p}}_{j_{1}\cdots j_{q}}(c+u)}{\partial u^{k}}\notag\\
& +(T_{l})^{i_{1}\cdots i_{p}}_{kj_{2}\cdots j_{q}}(c+u)\frac{\partial(H(c+u))^{k}}{\partial u^{j_{1}}}+\cdots+(T_{l})^{i_{1}\cdots i_{p}}_{j_{1}\cdots j_{q-1}k}(c+u)\frac{\partial(H(c+u))^{k}}{\partial u^{j_{q}}}\notag\\
& -(T_{l})^{li_{2}\cdots i_{p}}_{j_{1}\cdots j_{q}}(c+u)\frac{\partial(H(c+u))^{i_{1}}}{\partial u^{l}}-\cdots-(T_{l})^{i_{1}\cdots i_{p-1}l}_{j_{1}\cdots j_{q}}(c+u)\frac{\partial(H(c+u))^{i_{p}}}{\partial u^{l}}\notag\\
=& h_k(c^k+u^k)\frac{\partial (T_{l})^{i_{1}\cdots i_{p}}_{j_{1}\cdots j_{q}}(c+u)}{\partial u^{k}}\notag\\
& +\delta^k_{j_1}h_k(T_{l})^{i_{1}\cdots i_{p}}_{kj_{2}\cdots j_{q}}(c+u)+\cdots+\delta^k_{j_q}h_k(T_{l})^{i_{1}\cdots i_{p}}_{j_{1}\cdots j_{q-1}k}(c+u)\notag\\
& -\delta^{i_1}_{l}h_{i_{1}}(T_{l})^{li_{2}\cdots i_{p}}_{j_{1}\cdots j_{q}}(c+u)-\cdots--\delta^{i_p}_{l}h_{i_{p}}(T_{l})^{i_{1}\cdots i_{p-1}l}_{j_{1}\cdots j_{q}}(c+u)\notag\\
=& h_k(c^k+u^k)\frac{\partial (T_{l})^{i_{1}\cdots i_{p}}_{j_{1}\cdots j_{q}}(c+u)}{\partial u^{k}}\notag\\
&+(h_{j_{1}}+\cdots+h_{j_{q}}-h_{i_{1}}-\cdots-h_{i_{p}})(T_{l})^{i_{1}\cdots i_{p}}_{j_{1}\cdots j_{q}}(c+u).
\end{align}
By \eqref{Tr}, $(T_{l})^{i_{1}\cdots i_{p}}_{j_{1}\cdots j_{q}}(x)$ can be written as
$$
(T_{l})^{i_{1}\cdots i_{p}}_{j_{1}\cdots j_{q}}(x)=\sum_{s_{1}k_{1}+\cdots+s_{n}k_{n}
=l-s_{j_{1}}-\cdots-s_{j_{q}}+s_{i_{1}}+\cdots+s_{i_{p}}}a_{k_{1}\cdots k_{n}}(x^{1})^{k_{1}}\cdots(x^{n})^{k_{n}},
$$
thus
\begin{small}
\begin{align}\label{h2}
&\sum_{r=1}^{n}(h_r(c^r+u^r))\frac{\partial}{\partial u^{r}}(T_{l})^{i_{1}\cdots i_{p}}_{j_{1}\cdots j_{q}}(c+u)\notag\\[1mm]
=&\sum_{r=1}^{n}(h_r(c^r+u^r))\sum_{s_{1}k_{1}+\cdots+s_{n}k_{n}
=l-s_{j_{1}}-\cdots-s_{j_{q}}+s_{i_{1}}+\cdots+s_{i_{p}}}\frac{\partial}{\partial u^{r}}[a_{k_{1}\cdots k_{n}}(c^1+u^1)^{k_{1}}\cdots(c^n+u^n)^{k_{n}}]\notag\\[1mm]
=& \sum_{s_{1}k_{1}+\cdots+s_{n}k_{n}=l-s_{j_{1}}-\cdots-s_{j_{q}}+s_{i_{1}}+\cdots+s_{i_{p}}}a_{k_{1}\cdots k_{n}}
\sum_{r=1}^{n}(h_rk_r)(c^{1}+u^{1})^{k_{1}}\cdots(c^{n}+u^{n})^{k_{n}}
\notag\\[1mm]
=& \sum_{r=1}^{n}(\alpha s_rk_r)\sum_{s_{1}k_{1}+\cdots+s_{n}k_{n}=l-s_{j_{1}}-\cdots-s_{j_{q}}+s_{i_{1}}+\cdots+s_{i_{p}}}
a_{k_{1}\cdots k_{n}}(c^{1}+u^{1})^{k_{1}}\cdots(c^{n}+u^{n})^{k_{n}}\notag\\[1mm]
=& (\alpha l-h_{j_{1}}-\cdots-h_{j_{q}}+h_{i_{1}}+\cdots+h_{i_{p}})(T_{l})^{i_{1}\cdots i_{p}}_{j_{1}\cdots j_{q}}(c+u).
\end{align}
\end{small}

By (\ref{gm}), (\ref{h1}) and (\ref{h2}), $(L_{\tilde{g}}\tilde{T}(u^0,u))^{i_{1}\cdots i_{p}}_{j_{1}...j_{q}}=0$ holds for any $i_{r}, j_{s}\in\{1,\cdots,n\}$.

By Theorem \ref{th1}, at least one of resonant condition of (\ref{eq032}) type has to be fulfilled.

Moreover, since $\lambda_{1}=-1$, when $l\geqslant 0$, (\ref{eq032}) can be reduced to
\begin{equation}\label{42}
-l+(m-1)\sum_{j=1}^{n}k_{j}\lambda_{j}=(m-1)(\lambda_{i_{1}}+\cdots+\lambda_{i_{p}}-\lambda_{j_{1}}-\cdots-\lambda_{j_{q}}),
k_{j} \in\mathbb{N}.
\end{equation}
By rewrite $l+(m-1)k_{1}\rightarrow k_{1}$ and $(m-1)k_{j}\rightarrow k_{j}$, $j=2,\cdots,n$ in \eqref{42}, one can obtain (\ref{eq32}). The theorem is proved.
\end{proof}

\begin{remark}
Our result can be viewed as a generalization of Kozlov's result in \cite{xx1}. On the one hand, Kozlov's result presents the theorem on the existence of the quasi-homogeneous tensor invariants which are nonzero at $x=c$. our result is given without the restriction. On the other hand, our method is different from his. We can think of it as follows. Kozlov's result is considered on the particular solution. However, our result can be regarded as the first order variation along the paticular solution. More interestingly, by our method, we can even consider the problem of higher order variation.
\end{remark}

\section{Examples}
In this section, let us give several examples to illustrate our results.
\\
\noindent{\bf Example 1}
{\rm To illustrate Theorem \ref{th1}, we consider the following artificial system
\begin{align}\label{arti}
\begin{cases}
{\dot x_1}=x_1,\\
{\dot x_2}=\sqrt{2}x_2.
\end{cases}
\end{align}}

Obviously, $\lambda_1=1,\lambda_2=\sqrt{2}$ are the eigenvalues of the Jacobian matrix of system ({\ref{arti}}). By Theorem \ref{th1}, if system (\ref{arti}) has an analytic tensor invariant of type $(p,q)$, then at least one of the following resonant condition holds:
\begin{align}\label{reson0}
k_{1}+\sqrt{2}k_{2}=\lambda_{i_{1}}+\cdots+\lambda_{i_{p}}-\lambda_{j_1}-\cdots-\lambda_{j_q},
\end{align}
where $k_1, k_2\in\mathbb{N}, i_{r}, j_{s}\in\{1, 2\}$.
\begin{cor}
(1) Any analytic tensor invariant of type $(0,0)$ of degree $k$ of system (\ref{arti}) is trivial;

(2) Any analytic tensor invariant of type $(1,0)$ of system (\ref{arti}) has the form
\begin{align*}
T(x)=\alpha x_1\frac{\partial}{\partial x_1}+\beta x_2\frac{\partial}{\partial x_2};
\end{align*}

(3) Any analytic tensor invariant of type $(1,1)$ of system (\ref{arti}) has the form
\begin{align*}
T(x)=\alpha \frac{\partial}{\partial x_1}\otimes dx_1+\beta \frac{\partial}{\partial x_2}\otimes dx_2;
\end{align*}

(4) Any analytic tensor invariant of type $(2,0)$ of system (\ref{arti}) has the form
\begin{align*}
T(x)=\beta x_1^2 \frac{\partial}{\partial x_1}\otimes \frac{\partial}{\partial x_1}+\alpha x_1x_2\frac{\partial}{\partial x_1}\otimes \frac{\partial}{\partial x_2}+\gamma x_1x_2 \frac{\partial}{\partial x_2}\otimes \frac{\partial}{\partial x_1}+\zeta x_2^2 \frac{\partial}{\partial x_2}\otimes \frac{\partial}{\partial x_2};
\end{align*}

(5) System (\ref{arti}) admits no analytic tensor invariants of type $(p,q)$ when $q>p$;
where $\alpha,\beta,\gamma,\zeta$ are constants.
\end{cor}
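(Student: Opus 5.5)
The plan is to bypass the general machinery of Theorem \ref{th1} and use the fact that system (\ref{arti}) is linear and diagonal. For such a system the Lie derivative acts diagonally on monomial tensor fields. Write $F=(x_1,\sqrt{2}x_2)$, $\lambda_1=1$, $\lambda_2=\sqrt2$, and take a basis element
$$
M=x_1^{k_1}x_2^{k_2}\frac{\partial}{\partial x_{i_1}}\otimes\cdots\otimes\frac{\partial}{\partial x_{i_p}}\otimes dx_{j_1}\otimes\cdots\otimes dx_{j_q}.
$$
Formula (\ref{eq2}) gives directly
$$
L_FM=\bigl(k_1\lambda_1+k_2\lambda_2+\lambda_{j_1}+\cdots+\lambda_{j_q}-\lambda_{i_1}-\cdots-\lambda_{i_p}\bigr)M,
$$
because $\partial F^k/\partial x^j=\lambda_j\delta^k_j$ and the Euler term $F^s\partial_s$ multiplies a monomial by $k_1\lambda_1+k_2\lambda_2$.

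Now expand an analytic tensor $T$ into its convergent monomial series. Then $L_FT$ is the series of these scalar multiples, since termwise differentiation is legitimate for convergent power series. Hence $L_FT=0$ if and only if every monomial appearing in $T$ with a nonzero coefficient satisfies the resonance (\ref{reson0}). This is the sharpened, "if and only if" form of Theorem \ref{th1} (equivalently, Corollary \ref{corollary1} pushed down to single monomials). It also shows at once that every monomial satisfying the resonance is itself an invariant, so the converse direction of each item in the corollary comes for free.

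The key arithmetic step uses the irrationality of $\sqrt2$. Let $a_i,b_i$ be the numbers of upper indices equal to $1$ and $2$ respectively, so $a_i+b_i=p$, and let $a_j,b_j$ be the same counts for the lower indices, so $a_j+b_j=q$. Then (\ref{reson0}) reads
$$
k_1+\sqrt2\,k_2=(a_i-a_j)+\sqrt2\,(b_i-b_j).
$$
Since $1$ and $\sqrt2$ are rationally independent, this is equivalent to $k_1=a_i-a_j$ and $k_2=b_i-b_j$. Adding the two equations gives $k_1+k_2=p-q$. Because $k_1,k_2\geq 0$, there is no admissible monomial when $q>p$, which proves (5).

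The remaining items are finite enumerations.
\begin{itemize}
\item For $p=q=0$ we get $k_1=k_2=0$, so only constants survive; these are trivial invariants, giving (1).
\item For $(p,q)=(1,0)$ we need $k_1+k_2=1$ with $k_1=a_i$. The upper index $1$ forces the monomial $x_1$, and the index $2$ forces $x_2$. This gives $\alpha x_1\partial_{x_1}+\beta x_2\partial_{x_2}$, which is (2).
\item For $(p,q)=(1,1)$ we need $k_1=k_2=0$ together with $a_i=a_j$, i.e.\ $i_1=j_1$. This leaves $\alpha\,\partial_{x_1}\otimes dx_1+\beta\,\partial_{x_2}\otimes dx_2$, which is (3).
\item For $(p,q)=(2,0)$ we need $(k_1,k_2)=(a_i,b_i)$. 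The index pair $(1,1)$ gives $x_1^2$, the pairs $(1,2)$ and $(2,1)$ each give $x_1x_2$, and $(2,2)$ gives $x_2^2$. This is exactly (4), with independent constants $\beta,\alpha,\gamma,\zeta$.
\end{itemize}

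The only point requiring care is the first step: we need equivalence rather than the one-sided necessary condition of Theorem \ref{th1}. This is handled by the explicit eigen-relation for $L_F$ on monomials, together with termwise application of $L_F$ to the convergent series.
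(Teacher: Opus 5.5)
Your proof is correct, and it takes a somewhat different route from the paper. The paper first uses Lemma \ref{lemma1} and its corollary to reduce to homogeneous pieces $T^{(k)}$. It uses the resonance (\ref{reson0}) only to fix the degree $k=p-q$, using the rational independence of $1$ and $\sqrt2$ without stating it. Then, for each of items (2)--(4), it writes the general homogeneous tensor of that degree and solves the resulting linear equations for the coefficients by hand. Item (1) is handled by citing Theorem C of \cite{ref23}, and item (5) by the same $k=p-q<0$ contradiction you give. You instead observe that, for this linear diagonal field, every monomial tensor is an eigenvector of $L_F$ with eigenvalue $k_1+\sqrt2\,k_2+\sum_s\lambda_{j_s}-\sum_r\lambda_{i_r}$. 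This makes (\ref{reson0}) a necessary and sufficient condition monomial by monomial, and the items follow from a short enumeration. Your route has three advantages: item (1) needs no external reference, the rational-independence step is made explicit, and you get for free that the listed tensors really are invariants, which the paper never checks. The paper's route has the advantage of generality: bounding the degree by the resonance and then solving a finite linear system for the coefficients does not require $L_F$ to act diagonally on monomials. That is the same procedure the paper reuses for the nonlinear, non-diagonal Examples 2 and 3, whereas your eigenvector argument depends on the field being linear and diagonal in the given coordinates.
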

\begin{proof}
Since system (\ref{arti}) is linear, by Lemma \ref{lemma1} and Corollary \ref{cor1}, if
\begin{align*}
T=T^{(k)}+T^{(k+1)}+\cdots
\end{align*}
is an analytic tensor field of type $(p,q)$ of (\ref{arti}), where $T^{(r)}(r=k,k+1,\cdots,)$ whose components are homogeneous polynomials of degree $r$, then $T^{(r)}(r=k,k+1,\cdots,)$ is an analytic tensor invariant of type $(p,q)$ of system (\ref{arti}) and (\ref{reson0}) hold, where $k=k_1+k_2$.

(1) Let $T=T(x_1,x_2)$ be an analytic tensor invariant of type $(0,0)$ of system (\ref{arti}), i.e.,
$T$ is an analytic first integral of system (\ref{arti}), then by Theorem C in \cite{ref23}, $T$ has to be a constant, i.e., $T$ is a trivial tensor invariant.

(2) By (\ref{reson0}), we have $k=k_1+k_2=1$.
Let
$$
T=(\beta_1x_1+\beta_2x_2)\frac{\partial}{\partial x_{1}}+(\gamma_1x_1+\gamma_2x_2)\frac{\partial}{\partial x_{2}}
$$
be a tensor invariant of type $(1,0)$ of system (\ref{arti}), where $\beta_i, \gamma_j$ are constants. Let $F$ denote the vector field of (\ref{arti}), i.e.,
$$
F=x_1\frac{\partial}{\partial x_{1}}+\sqrt{2}x_2\frac{\partial}{\partial x_{2}}.
$$ By $(L_{F}T)^i=0, i=1, 2$ we get
\begin{align*}
(\sqrt{2}-1)\beta_2x_2=&~0,\\
(\sqrt{2}-1)\gamma_1x_1=&~0,
\end{align*}
which leads to $\beta_2=\gamma_1=0$. Therefore
\begin{align*}
T(x)=\alpha x_1\frac{\partial}{\partial x_1}+\beta x_2\frac{\partial}{\partial x_2}.
\end{align*}

(3) By (\ref{reson0}), we have $k=k_1+k_2=0$.
Let
$$
T=\sum_{i,j=1}^3\beta_{ij}\frac{\partial}{\partial x_i}\otimes dx_j
$$
be a tensor invariant of type $(1,1)$ of system (\ref{arti}), where $\beta_i, \gamma_j$ are constants.
By $(L_{F}T)^i_j=0, i,j=1, 2$ we get
\begin{align*}
(\sqrt{2}-1)\beta_{12}=&~0,\\
(\sqrt{2}-1)\beta_{21}=&~0,
\end{align*}
which lead to $\beta_{ij}=0$ for $i\neq j$. Therefore
\begin{align*}
T=\beta_{11} \frac{\partial}{\partial x_1}\otimes dx_1+\beta_{22} \frac{\partial}{\partial x_2}\otimes dx_2.
\end{align*}

(4) By (\ref{reson0}), we have $k=k_1+k_2=2$.
Let
$$
T=\sum_{i,j=1}^3(\beta_{ij}x_1^2+\alpha_{ij}x_1x_2+\gamma_{ij}x_2^2)\frac{\partial}{\partial x_i}\otimes \frac{\partial}{\partial x_j}
$$
be a tensor invariant of type $(2,0)$ of system (\ref{arti}), where $\beta_{ij},\alpha_{ij}, \gamma_{ij}$ are constants.
By $(L_{F}T)^{ij}=0, i,j=1, 2$ we get
\begin{align*}
(\sqrt{2}-1)\alpha_{11}x_1x_2+(2\sqrt{2}-2)\gamma_{11}x_2^2=&~0,\\
(\sqrt{2}-1)\beta_{12}x_1^2+(1-\sqrt{2})\gamma_{12}x_2^2=&~0,\\
(\sqrt{2}-1)\beta_{21}x_1^2+(1-\sqrt{2})\gamma_{21}x_2^2=&~0,\\
(2-2\sqrt{2})\beta_{22}x_1^2+(1-\sqrt{2})\alpha_{22}x_1x_2=&~0,
\end{align*}
which lead to $\alpha_{11}=\gamma_{11}=\beta_{12}=\gamma_{12}=\beta_{21}=\gamma_{21}
=\alpha_{22}=\beta_{22}=0$. Therefore
\begin{align*}
T(x)=\beta x_1^2 \frac{\partial}{\partial x_1}\otimes \frac{\partial}{\partial x_1}+\alpha x_1x_2\frac{\partial}{\partial x_1}\otimes \frac{\partial}{\partial x_2}+\gamma x_1x_2 \frac{\partial}{\partial x_2}\otimes \frac{\partial}{\partial x_1}+\zeta x_2^2 \frac{\partial}{\partial x_2}\otimes \frac{\partial}{\partial x_2}.
\end{align*}

(5) By (\ref{reson0}) and $q>p$, one can obtain $0\leqslant k =p-q<0$, a contraction.
\end{proof}

\noindent{\bf Example 2}
{\rm Consider a three-dimensional Lotka-Volterra system
\begin{align}\label{ex1}
\begin{cases}
\dot{x}_{1}=2x_{1}^{2}+2x_{1}x_{2}+x_{1}x_{3},
\\\dot{x}_{2}=-x_{1}x_{2}+x_{2}^{2}+3x_{2}x_{3},
\\\dot{x}_{3}=3x_{2}x_{3}-x_{3}^{2}.
\end{cases}
\end{align}}

System (\ref{ex1}) is a quasi-homogeneous system of degree 2 with exponents $s_{1}=s_{2}=s_{3}=1$. So $m=2, S=E={\rm diag}(1,1,1), \alpha=\frac{1}{m-1}=1, H=\alpha S=E$.
Since the following algebraic equations
\begin{equation}\nonumber
\begin{aligned}
\begin{cases}
c_{1}+2c_{1}^{2}+2c_{1}c_{2}+c_{1}c_{3}=0,
\\c_{2}-c_{1}c_{2}+c_{2}^{2}+3c_{2}c_{3}=0,
\\c_{3}+3c_{2}c_{3}-c_{3}^{2}=0
\end{cases}
\end{aligned}
\end{equation}
have a nonzero solution $c=(0,-1,0)$, system (\ref{ex1}) has a particular solution $x_{0}(t)=t^{-H}c=t^{E}c$.
The corresponding Kovalevskaya matrix
\begin{equation}
K=\left(\begin{array}{ccc}
-1 & 0  & 0 \\
1  & -1 & -3 \\
0  & 0  & -2 \\
\end{array}
\right)
\end{equation}
with Kovalevskaya exponents $\lambda_{1}=-1,\lambda_{2}=-1,\lambda_{3}=-2$.
\begin{cor}
(1) Any analytic tensor invariant of type $(0,0)$ of system (\ref{ex1}) is trivial;

(2) Any analytic tensor invariant of type $(1,1)$ of system (\ref{ex1}) is trivial;



(3) Any analytic tensor invariant of type $(1,0)$ of system (\ref{ex1}) is the vector field itself up to a constant;

(4) System (\ref{ex1}) admits no analytic tensor invariants of type $(p,q)$ when $q>\frac{3}{2}p$;
\end{cor}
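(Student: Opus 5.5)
The plan is to exploit that system (\ref{ex1}) is itself quasi-homogeneous of degree $m=2$ with $S=E$. By Lemma \ref{lemma4}, every quasi-homogeneous piece $T_r$ of an analytic tensor invariant $T$ is again a tensor invariant. Hence it suffices to decide, degree by degree, which quasi-homogeneous invariants $T_r$ can exist.

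For each such piece, Theorem \ref{th2} applies with $T_r$ as the invariant, $l=r$, and $\lambda_1=\lambda_2=-1$, $\lambda_3=-2$, $m-1=1$. Writing $K=k_1+k_2+2k_3\ge 0$, the resonance (\ref{eq032}) reads
\begin{equation*}
-r-K=\lambda_{i_1}+\cdots+\lambda_{i_p}-\lambda_{j_1}-\cdots-\lambda_{j_q}.
\end{equation*}
Since $s_1=s_2=s_3=1$, a piece whose components are homogeneous of degree $d$ has $r=d+q-p$, so $r\ge q-p$ by Remark \ref{remark3}. Every item will be reduced to this inequality plus the fact that each $\lambda_i\in\{-1,-2\}$.

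\textbf{Item (4).} I do this first, since it is pure arithmetic. The right-hand side satisfies
\begin{equation*}
\lambda_{i_1}+\cdots+\lambda_{i_p}-\lambda_{j_1}-\cdots-\lambda_{j_q}\ \ge\ -2p+q .
\end{equation*}
The left-hand side satisfies $-r-K\le -r\le p-q$. Thus a resonance forces $q-2p\le p-q$, i.e. $q\le \tfrac32 p$. So when $q>\tfrac32 p$ every $T_r$ vanishes, and hence $T=0$.

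\textbf{Item (1).} For $p=q=0$ we have $r=d\ge0$, and the resonance is $r+K=0$. This forces $r=0$, so only the constant piece $T_0$ survives and $T$ is a constant.

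\textbf{Item (2).} For $p=q=1$ the right-hand side lies in $\{0,\pm1\}$ and $r=d\ge0$, so $r+K\in\{0,1\}$, i.e. $d\in\{0,1\}$. For $d=0$, with $T^i_j$ constant, $L_FT=0$ says the constant matrix $(T^i_j)$ commutes with $DF(x)$ for all $x$. Since $DF(x)$ is linear in $x$, the coefficient matrices of $x_1,x_2,x_3$ must all commute with $(T^i_j)$; I expect their commutant to be the scalars, giving $T=c\sum_i\frac{\partial}{\partial x^i}\otimes dx^i$, the trivial form (\ref{t1-1}). For $d=1$ (possible only with $i=3$, $j\in\{1,2\}$ in the resonance) I write a general linear $(1,1)$ tensor with $27$ coefficients. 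The equations $(L_FT)^i_j=0$ are quadratic polynomial identities, and I expect the resulting linear system for the coefficients to have only the zero solution.

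\textbf{Item (3).} For $p=1,q=0$ the right-hand side is $\lambda_i\in\{-1,-2\}$ and $r=d-1\ge-1$. Hence $r+K\in\{1,2\}$, which allows $d\in\{0,1,2,3\}$. For each such $d$ I would impose $[F,T]=0$ on a general homogeneous polynomial vector field $T$ of degree $d$ and solve the linear system for its coefficients. I expect only $d=2$, $T=cF$, to survive. This case-by-case bracket computation, especially for $d=3$ with $30$ unknowns, is the main obstacle. To tame it I would first use the invariant coordinate planes of the Lotka--Volterra structure ($x_1=0$, $x_2=0$, $x_3=0$ are invariant): restricting $[F,T]=0$ to these planes should kill many coefficients before the full system is solved.
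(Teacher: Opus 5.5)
Your proposal is correct and follows the paper's proof essentially step by step. You use Lemma~\ref{lemma4} to reduce to quasi-homogeneous pieces, then Theorem~\ref{th2} with exponents $-1,-1,-2$ together with Remark~\ref{remark3} to get $q-p\le r\le 2p-q$; this gives item (4) immediately and leaves $r=0$, $r\in\{0,1\}$ and $r\in\{-1,0,1,2\}$ for items (1)--(3), which are settled by solving $L_FT_r=0$ directly. Your commutant argument for constant $(1,1)$ tensors and the restriction to the invariant planes $x_i=0$ are only computational conveniences the paper does not use, and the case computations you leave as expectations (which the paper likewise only asserts) do come out as you predict.
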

\begin{proof}
Since system (\ref{ex1}) is quasi-homogeneous, by Lemma \ref{lemma4} and Theorem \ref{th2}, if
\begin{align*}
 T(x)& =T_{l}(x)+T_{l+1}(x)+T_{l+2}(x)+\cdots
\end{align*}
is an analytic tensor invariant of type $(p,q)$ of (\ref{ex1}), where
$T_{r}(x)(r\geq l)$ is a quasi-homogeneous tensor field of type $(p,q)$ of degree $r$
with
\begin{align}\label{Tre}
(T_{r})^{i_{1}\cdots i_{p}}_{j_{1}\cdots j_{q}}(x)=\sum_{s_{1}k_{1}+\cdots+s_{n}k_{n}
=r-s_{j_{1}}-\cdots-s_{j_{q}}+s_{i_{1}}+\cdots+s_{i_{p}}}a_{k_{1}\cdots k_{n}}(x^{1})^{k_{1}}\cdots(x^{n})^{k_{n}},
\end{align}
then $T_{r}(r\geq l)$ is a quasi-homogeneous tensor invariant of degree $r$ of system (\ref{ex1}), and at least one of the following resonant condition holds:
\begin{align*}
-r-k_{1}-k_{2}-2k_{3}=\lambda_{i_{1}}+\cdots+\lambda_{i_{p}}-\lambda_{j_1}-\cdots-\lambda_{j_q},
\end{align*}
where $k_1, k_2, k_3\in \mathbb{N},~~i_{r}, j_{s}\in\{1,2,3\}$. Together with Remark \ref{remark3}, we have
\begin{align}\label{exre1}
q-p\leq r=-(k_{1}+k_{2}+2k_{3}+\lambda_{i}+\cdots+\lambda_{i_{p}}-\lambda_{j_1}\cdots-\lambda_{j_q})
\le 2p-q.
\end{align}


(1) Let $T$ be of type $(0,0)$, by \eqref{exre1}, $r=0$, by \eqref{Tre}, $T$ has to be a constant, i.e., $T$ is a trivial tensor invariant.

(2) Let $T$ be of type $(1,1)$, by (\ref{exre1}), $0\le r\le 1$.

If $r=0$, by \eqref{Tre}, then $T_r$ has the form
$$
T_r=\sum_{i,j=1}^3\beta_{ij}\frac{\partial}{\partial x_i}\otimes dx_j,
$$
where $\beta_{ij}$ are constants. Denote by $F$ the vector field associated to the system (\ref{ex1}), i.e.,
$$
F=(2 x_1^2+2x_1x_2+x_1x_3)\frac{\partial}{\partial x_{1}}+(-x_1x_2+x_2^2+3x_2x_3)\frac{\partial}{\partial x_{2}}+(3x_2x_3-x_3^2)\frac{\partial}{\partial x_{3}},
$$
then $(L_{F}T_r)^i_j=0(i,j=1,2,3)$ imply that
\begin{align*}
 \beta_{12}x_2+(2\beta_{21}+\beta_{31})x_1=&~0,\\
 (2\beta_{11}-2\beta_{22}-\beta_{32})x_1+3\beta_{13}x_3=&~0,\\
 (\beta_{11}-2\beta_{23}-\beta_{33})x_1=&~0,\\ 5\beta_{21}x_1+(-\beta_{22}+\beta_{11}-3\beta_{31})x_2-2\beta_{21}x_3=&~0,\\
 3\beta_{23}x_3-3\beta_{32}x_2=&~0,\\
 (3\beta_{22}-3\beta_{33})x_2=&~0,\\
 4\beta_{31}x_1-\beta_{31}x_2+3\beta_{31}x_{3}=&~0,\\
 (3\beta_{33}-3\beta_{22})x_3=&~0,
\end{align*}
which lead to $\beta_{ij}=0$ for $i\neq j$ and $\beta_{11}=\beta_{22}=\beta_{33}$. Therefore
\begin{equation*}
T_r=\beta_{11}\frac{\partial}{\partial x_{1}}\otimes dx_{1}+\beta_{11}\frac{\partial}{\partial x_{2}}\otimes dx_{2}+\beta_{11}\frac{\partial}{\partial x_{3}}\otimes dx_{3}.
\end{equation*}

If $r=1$, by \eqref{Tre}, then $T_r$ has the form
$$
T_r=\sum_{i,j=1}^3(\beta_{ij}x_1+\gamma_{ij}x_2+\zeta_{ij}x_3)\frac{\partial}{\partial x_i}\otimes dx_j,
$$
where $\beta_{ij}, \gamma_{ij}, \zeta_{ij}$ are constants. Similarly, by a series of complex calculations, we obtain $\beta_{ij}=\gamma_{ij}=\zeta_{ij}=0$.

In conclusion, any analytic tensor invariant of type $(1,1)$ of system (\ref{ex1}) is trivial.

(3) Let $T$ be of type $(1,0)$, by \eqref{exre1}, $-1\leq r\leq 2$.

{\bf Case $1$:} If $r=-1$, by \eqref{Tre}, then $T_r$ has the form
$$
T_r=\beta\frac{\partial}{\partial x_{1}}+\gamma\frac{\partial}{\partial x_{2}}+\zeta\frac{\partial}{\partial x_{3}},
$$
where $\beta, \gamma, \zeta$ are constants. By $(L_{F}T_r)^i=0(i=1, 2, 3)$,  we get
\begin{align*}
(-4a\beta-2\gamma-\zeta)x_1-2\beta x_2-\beta x_3=&~0,\\
\gamma x_1+(\beta-2\gamma-3\zeta)x_2=&~0,\\
(2\zeta-3\gamma)x_3=&~0,
\end{align*}
which leads to $\beta=\gamma=\zeta=0$.

{\bf Case $2$:} If $r=0$, by \eqref{Tre}, then $T_r$ has the form
$$
T_r=(\beta_1x_1+\beta_2x_2+\beta_3x_3)\frac{\partial}{\partial x_{1}}+(\gamma_1x_1+\gamma_2x_2+\gamma_3x_3)\frac{\partial}{\partial x_{2}}+(\zeta_1x_1+\zeta_2x_2+\zeta_3x_3)\frac{\partial}{\partial x_{3}},
$$
where $\beta_{i}, \gamma_i, \zeta_i$ are constants.
Similarly, we can obtain that $\beta_i=\gamma_j=\zeta_r=0$.

{\bf Case $3$:} If $r=1$, by \eqref{Tre}, then $T_r$ has the form
$$
T_r=(\sum_{i,j=1,i\leq j}^3\beta_{ij}x_ix_j)\frac{\partial}{\partial x_{1}}+(\sum_{i,j=1,i\leq j}^3\gamma_{ij}x_ix_j)\frac{\partial}{\partial x_{2}}+(\sum_{i,j=1,i\leq j}^3\zeta_{ij}x_ix_j)\frac{\partial}{\partial x_{3}},
$$
where $\beta_{ij}, \gamma_{ij}, \zeta_{ij}$ are constants. Similarly, we derive
\begin{align*}
&\beta_{22}=\beta_{23}=\beta_{33}=\gamma_{11}=\gamma_{13}=\gamma_{33}=\zeta_{11}=\zeta_{12}
=\zeta_{13}=\zeta_{22}=0,\\
&\beta_{11}=\beta_{12}=2\beta_{13}=-2\gamma_{12}=2\gamma_{22}=\frac{2}{3}\gamma_{23}=
\frac{2}{3}\zeta_{23}=-2\zeta_{33},
\end{align*}
therefore
$$
T_r=\frac{1}{2}\beta_{11}[(2 x_1^2+2x_1x_2+x_1x_3)\frac{\partial}{\partial x_{1}}+(-x_1x_2+x_2^2+3x_2x_3)\frac{\partial}{\partial x_{2}}+(3x_2x_3-x_3^2)\frac{\partial}{\partial x_{3}}]=\frac{1}{2}\beta_{11} F.
$$

{\bf Case $4$:} If $r=2$, by \eqref{Tre}, then $T_r$ has the form
$$
T_r=(\sum_{i+j+r=3}\beta_{ijr}x_1^ix_2^jx_3^r)\frac{\partial}{\partial x_{1}}+(\sum_{i+j+r=3}\gamma_{ijr}x_1^ix_2^jx_3^r)\frac{\partial}{\partial x_{2}}+(\sum_{i+j+r=3}\zeta_{ijr}x_1^ix_2^jx_3^r)\frac{\partial}{\partial x_{3}},
$$
where $\beta_{ijr}, \gamma_{ijr}, \zeta_{ijr}$ are constants. After some cumbersome calculations, we obtain $\beta_{ijr}=\gamma_{ijr}=\zeta_{ijr}=0$.

In conclusion, any analytic tensor invariant of type $(1,0)$ of system (\ref{ex1}) is the vector field itself up to a constant.

(4) By $q> \frac{3}{2}p$ and \eqref{exre1}, one can obtain $\frac{p}{2}<q-p\leq l\le 2p-q<\frac{p}{2}$, a contradiction.
\end{proof}

\noindent{\bf Example 3}
{\rm Consider a perturbed oregonator model \cite{ref2,xx6}
\begin{align}\label{ex21}
\begin{cases}
\dot{x}=\alpha(y-xy+x-\varepsilon xz-gx),
\\\dot{y}=\alpha^{-1}(-y-xy+fz),
\\\dot{z}=\beta(x-\varepsilon xz-z).
\end{cases}
\end{align}}

This system of equations describes a hypothetical chemical reaction of the Belousov-Zhabotinsky type where variables $x, y, z$ mean concentrations of reagents. From the model, $\alpha, \beta, \varepsilon, f, g>0$. We notice that system (\ref{ex21}) is negatively semi-quasihomogeneous. Its cut can be written as follows
\begin{align}\label{ex2}
\begin{cases}
\dot{x}=-\alpha x(y+gx),
\\\dot{y}=-\alpha^{-1}xy,
\\\dot{z}=\beta x(1-\varepsilon z).
\end{cases}
\end{align}
System (\ref{ex2}) is a quasi-homogeneous system of degree 2 with exponents $s_{1}=s_{2}=1,s_{3}=0$. So $m=2, \alpha=\frac{1}{m-1}=1, S=H=diag(1,1,0)$.
Since the following algebraic equations
\begin{equation}\nonumber
\begin{aligned}
\begin{cases}
c_{1}-\alpha c_{1}c_{2}-\alpha gc_{1}^{2}=0,
\\c_{2}-\alpha^{-1}c_{1}c_{2}=0,
\\\beta c_{1}-\beta\varepsilon c_{1}c_{3}=0
\end{cases}
\end{aligned}
\end{equation}
have a nonzero solution $c=(\alpha,\alpha^{-1}-\alpha g,\varepsilon^{-1})$, system (\ref{ex2}) has a particular solution $x_{0}(t)=t^{-H}c$. The corresponding Kovalevskaya matrix
\begin{equation}
{\left(\begin{array}{ccc}
-g\alpha^{2}  &  -\alpha^{2}  & 0 \\
-\alpha^{2}+g  & 0 & 0 \\
0  & 0  & -\alpha\beta\varepsilon \\
\end{array}
\right)}
\end{equation}
with Kovalevskaya exponents $\lambda_{1}=-1,\lambda_{2}=1-g\alpha^{2},\lambda_{3}=-\alpha\beta\varepsilon$.
\begin{cor}
(1) If $g\alpha^{2}> 1$, then any analytic tensor invariant of type $(0,0)$ of system (\ref{ex21}) is trivial;

(2) If $2< g\alpha^{2}< 3, 1< \alpha\beta\varepsilon < 2$, then any analytic tensor invariant of type $(1,1)$ of system (\ref{ex21}) is trivial;

(3) If $2< g\alpha^{2}< 3, 1< \alpha\beta\varepsilon < 2$, then system (\ref{ex21}) admits no analytic tensor invariants of type $(1,q)$ when $q\geqslant 2$;

(4) If $2< g\alpha^{2}< 3, 1< \alpha\beta\varepsilon < 2$, then system (\ref{ex21}) admits no analytic tensor invariants of type $(p,q)$ when $q\geqslant 3p$. Moreover, system (\ref{ex21}) admits no analytic tensor invariants  of degree $l\geqslant 0$ of type $(p,q)$ when $q\geqslant 2p$.

(5) If $1< g\alpha^{2}< 3, \alpha\beta\varepsilon < 2, \frac{\alpha g+\alpha^{-1}}{\beta\varepsilon}\notin\mathbb{Z}, \frac{2\alpha g+\alpha^{-1}}{\beta\varepsilon}\notin\mathbb{Z}$, then system (\ref{ex2}) admits only a tensor invariant of type $(1,0)$ except for the vector field itself which is $T=(\varepsilon z-1)\frac{\partial}{\partial z}$ .
\end{cor}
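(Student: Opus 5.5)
The plan is to treat all five items by one scheme: reduce to the quasi-homogeneous cut (\ref{ex2}), apply the resonance condition of Theorem \ref{th2}, and then settle the few surviving degrees by direct computation, as was done in Examples 1 and 2. Here $m=2$, $s=(1,1,0)$ and the Kovalevskaya exponents are $\lambda_1=-1$, $\lambda_2=1-g\alpha^2$, $\lambda_3=-\alpha\beta\varepsilon$. Let $T=T_l+T_{l-1}+\cdots$ be an invariant of (\ref{ex21}), expanded as in (\ref{tensorr}). By Lemma \ref{lemma3}, $T_l$ is a quasi-homogeneous invariant of (\ref{ex2}), and by Theorem \ref{th2},
\begin{equation*}
-l+\sum_{j=1}^{3}k_j\lambda_j=\lambda_{i_1}+\cdots+\lambda_{i_p}-\lambda_{j_1}-\cdots-\lambda_{j_q}.
\end{equation*}
From (\ref{Tr}), a nonzero component of $T_l$ needs $s\cdot k=l-\sum s_{j_r}+\sum s_{i_r}\ge 0$. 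Since $s_3=0$, this gives the lower bound $l\ge -p$ (and $l\ge 0$ for scalars). Under the hypotheses of (2)--(4), every exponent lies in $[-2,-1]$, and $\lambda_2,\lambda_3$ lie strictly inside $(-2,-1)$. The left-hand side is then at most $-l$, while the right-hand side lies in $[-2p+q,\,-p+2q]$. Combining the two bounds leaves only finitely many admissible $l$, and these are handled by hand.

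For (1), take $p=q=0$. The condition becomes $l+k_1+k_2(g\alpha^2-1)+k_3\alpha\beta\varepsilon=0$ with every term nonnegative, so $l=0$ and $k=0$. A degree-$0$ scalar depends only on $z$, say $T_0=\varphi(z)$, and $L_{g_m}\varphi=\beta x(1-\varepsilon z)\varphi'(z)=0$ forces $\varphi$ to be constant. Subtracting that constant, $T-T_0$ is again an invariant whose leading degree is strictly lower. Repeating the argument, the same bound $l\ge0$ forces the remainder to vanish, so $T$ is constant. I will use this peeling induction again in (2)--(4).

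For (2), take $p=q=1$. Then $l\ge -1$, the right-hand side is $\lambda_i-\lambda_j\in(-1,1)$, and the left-hand side is $-l-(\text{a nonnegative combination of }1,\ g\alpha^2-1\in(1,2),\ \alpha\beta\varepsilon\in(1,2))$. This leaves only $l\in\{-1,0,1\}$ with explicit small values of $k$. For each such $l$, I write the general $T_l$ from (\ref{Tr}) and solve $L_{g_m}T_l=0$ coefficientwise, expecting only $c\sum\partial_{x^i}\otimes dx^i$ at $l=0$. Peeling this trivial part off reduces to lower degree, where the inequalities leave nothing. For (3) and (4), the same inequalities become $q-p\le l$ together with $-l\ge -2p+q$ plus strict gains, which is impossible for $p=1,q\ge2$ and for $q\ge 3p$. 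When $l\ge0$, rewriting as in (\ref{eq32}) sharpens the bound to $q\ge 2p$.

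For (5), I first list the admissible $(l,k,i)$ for $p=1$, $q=0$; the two non-integrality hypotheses are exactly what removes the accidental resonances involving $\lambda_3$. Then I solve $[g_m,T_l]=0$ in each surviving degree, obtaining $g_m$ itself and $T=(\varepsilon z-1)\partial_z$. The latter is checked directly: $[g_m,T]^3=\beta x(1-\varepsilon z)\varepsilon-(\varepsilon z-1)(-\beta\varepsilon x)=0$, and the other components vanish trivially.

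I expect the main obstacle to be the coefficient computations in (2) and (5): solving $L_{g_m}T_l=0$ for general polynomial tensors of degrees $\pm1$ and $2$. That these force triviality relies on the hypotheses on $g\alpha^2$ and $\alpha\beta\varepsilon$, so I will organize that computation degree by degree in $z$, which carries weight $0$.
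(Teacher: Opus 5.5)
Your overall scheme (pass to the cut, apply Theorem~\ref{th2}, bound $l$, then compute in the few surviving degrees) is the paper's. Your peeling argument in (1)--(2) makes explicit a step the paper skips, and I checked that at $l=0$ in (2) only $c\,\mathrm{Id}$ survives.

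The argument for (3), however, has a real hole. There you switch to the bound $q-p\le l$. That is the bound of Remark~\ref{remark3} for $s_1=\cdots=s_n=1$, and it is false here because $s_3=0$: a component $T^{i}_{3\cdots 3}$ with $i\in\{1,2\}$ has weight $l+1$. So the correct bound is the $l\ge -p$ you derived yourself, and with it the resonance only gives $-p\le l<2p-q$. This does settle $q\ge 3p$ and the ``$l\ge0$, $q\ge 2p$'' claim when $p\ge1$. But for $p=1$, $q=2$ it leaves $l=-1$, and the resonance really holds there: $k=0$, $i=j_1=j_2=1$ gives $1=-1+1+1$. That case must be killed by hand, as the paper does (``when $l=-1$, by a short calculation''). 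Here $T_{-1}=f(z)\,\partial_x\otimes dz\otimes dz+h(z)\,\partial_y\otimes dz\otimes dz$, and $(L_{g_m}T_{-1})^{1}_{13}=\beta(1-\varepsilon z)f$, $(L_{g_m}T_{-1})^{2}_{13}=\beta(1-\varepsilon z)h$ force $T_{-1}=0$. Also, (4) must exclude $(p,q)=(0,0)$, where constants survive.

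For (5), the resonance step eliminates nothing. For $l=-1,0,1$ there are the trivial resonances $k=e_1+e_i$, $k=e_i$, and $k=0$ with $i=1$. So the non-integrality hypotheses are not conditions removing Kovalevskaya resonances; in the paper they enter only through the $z$-dependence in the direct computation.

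More seriously, the result you expect from the degree-$1$ computation is not true under the stated hypotheses. One has $[g_m,y\,\partial_z]=(\beta\varepsilon-\alpha^{-1})xy\,\partial_z$, so $y\,\partial_z$ is an additional symmetry of (\ref{ex2}) when $\alpha\beta\varepsilon=1$. This case is allowed: with $g\alpha^2=1.3$ the two quotients are $2.3$ and $3.6$. Likewise, $(x+\frac{\alpha}{\beta\varepsilon-\alpha^{-1}}y)\,\partial_z$ is a symmetry when $\alpha g=\beta\varepsilon$, which is admissible whenever $1<g\alpha^2<2$. So (5) needs extra hypotheses such as $\alpha\beta\varepsilon\neq1$ and $\alpha g\neq\beta\varepsilon$. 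The paper's ``cumbersome calculation'' in Case 3 has the same defect, so following it will not repair this.

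Finally, (\ref{ex21}) is negatively semi-quasihomogeneous, so Lemma~\ref{lemma3} needs a top-degree piece. Writing $T$ as in (\ref{tensorr}) therefore tacitly restricts (1)--(4) to invariants polynomial in $x,y$, exactly as in the paper, rather than arbitrary analytic ones.
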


\begin{proof} According to Lemma \ref{lemma3} and Theorem \ref{th2}, if system (\ref{ex21}) has an analytic tensor invariant of type $(p,q)$, then system (\ref{ex2}) admits a quasi-homogeneous tensor invariant of degree $l$, and at least one of the following resonant condition holds:
\begin{align*}
-l-k_{1}+(1-g\alpha^{2})k_{2}-\alpha\beta\varepsilon k_{3}=\lambda_{i_{1}}+\cdots+\lambda_{i_{p}}-\lambda_{j_1}-\cdots-\lambda_{j_q},
\end{align*}
where $k_1, k_2, k_3\in \mathbb{N},~~i_{r}, j_{s}\in\{1,\cdots,n\}$, which together with Remark \ref{remark3} lead to
\begin{align}
-p\leq l=-(k_{1}+k_{2}+2k_{3}+\lambda_{i}+\cdots+\lambda_{i_{p}}-\lambda_{j_1}\cdots-\lambda_{j_q}).
\end{align}
(1) If system (\ref{ex2}) has a quasi-homogeneous tensor invariant $T$ of type $(0,q)$ of degree $l$, then at least one of the following resonant condition holds:
$$
-l-k_{1}+(1-g\alpha^{2})k_{2}-\alpha\beta\varepsilon k_{3}=\lambda_{i_{1}}+\cdots+\lambda_{i_{p}}-\lambda_{j_{1}}-\cdots-\lambda_{j_{q}},
~~ k_1, k_2, k_3 \in \mathbb{N}.
$$
If $g\alpha^{2}> 1$, then
$$
l=\lambda_{j_{1}}+\cdots+\lambda_{j_{q}}-k_{1}+(1-g\alpha^{2})k_{2}-\alpha\beta\varepsilon k_{3}\le 0.
$$
Meanwhile, Similar to Remark \ref{remark3}, $l\geq 0$. Next, we discuss the case of $l=0$.

When $l=0$, which implies that $q=0$, $T$ has the form
$$
T=f(z),
$$
where $f$ is an analytic function. Let $G$ denote the vector field of (\ref{ex2}). By (\ref{eq2}), we have
$$
L_GT=\beta x(1-\varepsilon z)f'(z)=0,
$$
which leads to $f(z)=const$. It is a trivial tensor invariant of type $(0,0)$.

Except the above case, $l>0$. This is a contradiction, which means that system (\ref{ex2}) admits no nontrivial analytic tensor invariants of type $(0,q)$.

(2) If system (\ref{ex2}) has a quasi-homogeneous tensor invariant $T$ of type $(1,1)$ of degree $l$, then at least one of the following resonant condition holds:
$$
-l-k_{1}+(1-g\alpha^{2})k_{2}-\alpha\beta\varepsilon k_{3}=\lambda_{i}-\lambda_{j},~~ k_1, k_2, k_3 \in \mathbb{N}.
$$
If $2< g\alpha^{2}< 3, 1< \alpha\beta\varepsilon < 2$, then
$$
l=\lambda_{j}-\lambda_{i}-k_{1}+(1-g\alpha^{2})k_{2}-\alpha\beta\varepsilon k_{3}< \mu,
$$
where $0<\mu< 1$. Meanwhile, Similar to Remark \ref{remark3}, $l\geq -1$.

Case $1$: $T$ has the form
$$
T=f(z)\frac{\partial}{\partial x}\otimes dz+h(z)\frac{\partial}{\partial y}\otimes dz,
$$
By (\ref{eq2}), $(L_{G}T)^1_1=(L_{G}T)^2_1=0$, i.e.,
\begin{align*}
 f(z)\beta(1-\varepsilon z)=&~0,\\
 h(z)\beta(1-\varepsilon z)=&~0,
\end{align*}
which leads to $f(z)=h(z)=0$.

Case $2$: $T$ has the form
\begin{align*}
T=& f_{11}(z)\frac{\partial}{\partial x}\otimes dx+f_{12}(z)\frac{\partial}{\partial x}\otimes dy+[xf_{13}(z)+yh_{13}(z)]\frac{\partial}{\partial x}\otimes dz+f_{21}(z)\frac{\partial}{\partial y}\otimes dx \\
& +f_{22}(z)\frac{\partial}{\partial y}\otimes dy+[xf_{23}(z)+yh_{23}(z)]\frac{\partial}{\partial y}\otimes dz +f_{33}(z)\frac{\partial}{\partial z}\otimes dz.
\end{align*}
By a series of cumbersome calculations, we obtain $f_{11}=f_{22}=f_{33}=const., ~f_{ij}=h_{ij}=0$, which is same to Example $1$.

Consequently, any analytic tensor invariant of type $(1,1)$ of system (\ref{ex2}) is trivial.

(3) When $q\geq 2$, if system (\ref{ex2}) has an analytic tensor invariant of type $(1,q)$, then at least one of the following resonant condition holds:
$$
-l-k_{1}+(1-g\alpha^{2})k_{2}-\alpha\beta\varepsilon k_{3}=\lambda_{i}-\lambda_{j_1}-\cdots-\lambda_{j_q},~~ k_1, k_2, k_3 \in \mathbb{N}.
$$
Obviously,
$$
l=-k_{1}+(1-g\alpha^{2})k_{2}-\alpha\beta\varepsilon k_{3}-\lambda_{i}+\lambda_{j_1}\cdots+\lambda_{j_q}<\nu,
$$
where $\nu< 0$.
When $l=-1$, by a short calculation, we obtain $T=0$.
Consequently, system (\ref{ex2}) has no analytic tensor invariants of type $(1,q)$.

(3) If system (\ref{ex2}) has an analytic tensor invariant of type $(1,q)$, then at least one of the following resonant condition holds:
$$
-l-k_{1}+(1-g\alpha^{2})k_{2}-\alpha\beta\varepsilon k_{3}=\lambda_{i_1}+\cdots+\lambda_{i_p}-\lambda_{j_1}-\cdots-\lambda_{j_q},~~ k_1, k_2, k_3 \in \mathbb{N}.
$$
Obviously,
$$
l=-k_{1}+(1-g\alpha^{2})k_{2}-\alpha\beta\varepsilon k_{3}-\lambda_{i_1}-\cdots-\lambda_{i_p}+\lambda_{j_1}\cdots+\lambda_{j_q}<\nu,
$$
where $\nu< 2p-q$. If $2< g\alpha^{2}< 3, 1< \alpha\beta\varepsilon < 2$, $2p-q\leq-p$. Consequently, system (\ref{ex2}) has no analytic tensor invariants of type $(p,q)$ when $p\geqslant2, q\geqslant 3p$. Moreover, if $l\geq0$, then system (\ref{ex2}) has no analytic tensor invariants of type $(1,q)$ of degree $l\geqslant 0$ when $p\geqslant2, q\geqslant 2p$.

(4) If system (\ref{ex2}) has a quasi-homogeneous tensor invariant $T$ of type $(1,0)$ of degree $l$, then at least one of the following resonant condition holds:
$$
-l-k_{1}+(1-g\alpha^{2})k_{2}-\alpha\beta\varepsilon k_{3}=\lambda_{i},~~k_1, k_2, k_3\in\mathbb{N}.
$$
Obviously,
$$
l=-(k_{1}+(g\alpha^{2}-1)k_{2}+\alpha\beta\varepsilon k_{3}+\lambda_{i})\le \mu,
$$
where $0<\mu<2$. By Remark \ref{remark3}, we have $l\geq -1$.

Case $1$: $T$ has the form
$$
T=f(z)\frac{\partial}{\partial x}+h(z)\frac{\partial}{\partial y},
$$
By (\ref{eq2}), $(L_{G}T)^1=0$, i.e.,
$$
\beta x(1-\varepsilon z)f'(z)-(-\alpha y-2\alpha gx)f(z)+\alpha xh(z)=0,
$$
which leads to $f(z)=h(z)=0$.

Case $2$: $T$ has the form
$$
T=(xf_1(z)+yh_1(z))\frac{\partial}{\partial x_{1}}+(xf_2(z)+yh_2(z))\frac{\partial}{\partial x_{2}}+f_3(z)\frac{\partial}{\partial x_{3}},
$$
By (\ref{eq2}), $(L_{G}T)^i=0, i=1, 2, 3$, i.e.,
\begin{align*}
(-\alpha h_1(z)f_1(z)+\beta f'_1(z)-\beta\varepsilon zf_1'(z)+2\alpha h_1(z)+\alpha f_2(z))x^2+\alpha h_2(z)xy+\alpha h_1(z)y^2=&~0,\\
(-\alpha h_1(z)f_2(z)+\beta f'_2(z)-\beta\varepsilon zf_2'(z)-\alpha^{-1}f_2(z))x^2-(\alpha f_2(z)-\alpha^{-1}f_1(z))xy=&~0,\\
(\beta f'_3(z)-\beta\varepsilon zf_3'(z)-\beta f_1(z)+\beta\varepsilon f_3(z))x=&~0.
\end{align*}
If $\frac{\alpha g+\alpha^{-1}}{\beta\varepsilon}\notin\mathbb{Z}, \frac{2\alpha g+\alpha^{-1}}{\beta\varepsilon}\notin\mathbb{Z}$, we can obtain $T=(\varepsilon z-1)\frac{\partial}{\partial z}$.

Case $3$: $T$ has the form
\begin{align*}
T=& [(x^2f_{1}(z)+xyg_1(z)+y^2h_1(z))f(z)]\frac{\partial}{\partial x_{1}}+[(x^2f_{2}(z)+xyg_2(z)+y^2h_2(z))f(z)]\frac{\partial}{\partial x_{2}}\\
& +(xf_{3}(z)+yh_{3}(z))\frac{\partial}{\partial x_{3}}.
\end{align*}
Similarly, by a series of cumbersome calculations, we found that in this case $T$ is the vector field itself of system (\ref{ex2}).
Consequently, system (\ref{ex2}) admit only a tensor invariant of type $(1,0)$ except for the vector field itself which is $X=(\varepsilon z-1)\frac{\partial}{\partial z}$.
\end{proof}

\end{document}